\documentclass[11pt,reqno]{amsart}
\usepackage{tikz}
\textheight    23cm
\textwidth     15.cm
\addtolength{\textheight}{-0.75in}
\oddsidemargin   .4cm
\evensidemargin  .4cm
\parskip 6pt
\usepackage{subfig}
\usepackage{epstopdf}
\usepackage{epsfig}
\UseRawInputEncoding
\usepackage{math}
\graphicspath{{./figures/}}
\usepackage{tikz}
\usetikzlibrary{shapes,arrows}
\tikzstyle{decision} = [diamond, draw, fill=blue!20, 
    text width=4.5em, text badly centered, node distance=3cm, inner sep=0pt]
\tikzstyle{block} = [rectangle, draw, fill=blue!20, 
    text width=5em, text centered, rounded corners, minimum height=4em]
\tikzstyle{line} = [draw, -latex']
\tikzstyle{cloud} = [draw, ellipse,fill=red!20, node distance=3cm,
    minimum height=2em]

\newcommand{\ts}{\mathsf{T}}
\newcommand{\hh}{\mathrm{H}}
\usetikzlibrary{positioning}
\tikzset{main node/.style={circle,fill=blue!20,draw,minimum size=1cm,inner sep=0pt},  }

\begin{document}
\title[Transport Hessian metric]{Hessian metric via transport information geometry}
\author[Li]{Wuchen Li}
\email{wuchen@mailbox.sc.edu}
\address{Mathematics department, University of South Carolina, Columbia, 29036.}
\thanks{W. Li is supported by a start up funding in University of South Carolina.}

\keywords{Transport Hessian metric; Transport Hessian gradient flows; Transport Hessian Hamiltonian flows; Transport Hessian entropy dissipation; Shallow water equation; Transport Hessian information matrix.} 
\begin{abstract}
We propose to study the Hessian metric of {a functional on the space of probability measures endowed with the Wasserstein $2$-metric}. We name it transport Hessian metric, which contains and extends the classical Wasserstein-$2$ metric. We formulate several dynamical systems associated with transport Hessian metrics. Several connections between transport Hessian metrics and mathematical physics equations are discovered. For examples, the transport Hessian gradient flow, including Newton's flow, formulates a mean-field kernel Stein variational gradient flow; The transport Hessian Hamiltonian flow of Boltzmann-Shannon entropy forms the Shallow water equation; The transport Hessian gradient flow of Fisher information {leads to} the heat equation. {Several examples and closed-form solutions for transport Hessian distances are presented.}
\end{abstract}
\maketitle
\section{Introduction}
{Metric spaces of probability measures} {\cite{AGS1, AGS}} play crucial roles in differential geometry \cite{LiG, Villani2009_optimal}, mathematical physics \cite{Carlen, li_SE, LC, Modin, Lafferty, Nelson0, Nelson}, and information theory \cite{CoverThomas1991_elements}. One typical application of the metric is the Bayesian sampling problem \cite{sampling}, which is to sample a target distribution based on a prior distribution. In this case, the metric can be used to design objective functions. Famous examples include Kullback--Leibler (KL) divergence or {Wasserstein-$2$} distance. Besides, the metric can be applied to design sampling directions. Typical examples include Fisher--Rao gradient, {Wasserstein-$2$} gradient and Stein gradient. These gradient flows in the probability space represent various Markov processes on the sample space, such as birth-death dynamics \cite{IG, IG2}, Langevin dynamics \cite{JKO}, and Stein variational gradient flows \cite{SGD, Liu2017}. 

There are naturally geometric variational structures, as well as mathematical physics equations, associated with these metrics. On the one hand, information geometry studies the Hessian metric in probability density space \cite{IG, IG2}. In this area, the Fisher--Rao metric, i.e., the Hessian operator of negative Boltzmann--Shannon entropy in {$L^2$ space}, plays fundamental roles. It is an invariant metric to derive divergence functions, including the KL divergence. On the other hand, the Wasserstein-$2$ metric { a.k.a. transport metric}\footnote{There are many historic names for optimal transport metric \cite{Villani2009_optimal}. For the simplicity of presentation, we call Wasserstein-$2$ metric the {\em transport metric}.} has been studied in the area of optimal transport; see {\cite{AGS1, AGS, Lafferty,Villani2009_optimal}} and many references therein. Also, the transport metric deeply interacts with mathematical physics equations. A celebrated result is that the transport gradient {descent} flow of negative Boltzmann--Shannon entropy is the heat equation \cite{otto2001, JKO}. And transport Hamiltonian flows contain compressible Euler equation, Schr{\"o}dinger equation, and Schr{\"o}dinger bridge problem \cite{LC, CP, Modin, LiG}.  The Stein metrics have been studied independently \cite{Liu2017}, which formulate transport metrics with given kernel functions. The Stein metric recently find vast connections with mathematical physics equations and sampling problems \cite{SN, SGD, Liu2017}. 

To advance current works, a joint study between information geometry and optimal transport becomes essential. Here the Hessian metrics on density space are of great importance, which arises in differential geometry \cite{CY, P}, learning optimization problems \cite{SN, YL1} and variational formulations of mathematical physics equations \cite{SW}. Natural questions arise: {\em What is the Hessian metric in probability space embedded with the transport metric? What is the formulation of the transport Hessian metric related dynamics, such as gradient flows and Hamiltonian flows? Does the transport Hessian dynamics connect with mathematical physics equations?} 

In this paper, following studies in \cite{LiG}, we positively answer the above questions. We study the formulation of the Hessian metric for an energy in probability space embedded with optimal transport metric. We name such metric {\em transport Hessian metric}.  We derive both gradient and Hamiltonian flows for the transport Hessian metric. Several observations are made as follows:
\begin{itemize}
\item[(i)] The transport Hessian metric is a Stein metric with a mean field kernel. Hence the transport Newton's flow is a particular mean field Stein variational gradient flow;
\item[(ii)] At least in one dimensional sample space, the Hamiltonian flow of transport Hessian metric for negative Boltzmann-Shannon entropy forms the Shallow water equation;
\item[(iii)] We establish a relation among Boltzmann--Shannon entropy, Fisher information and transport Hessian metric. For example, the transport Hessian gradient flow of Fisher information satisfies the heat equation. 
\end{itemize}
Our result can be summarized by a variational problem as follows. Given a compact, smooth sample space $M$, denote the smooth probability density space on $M$ by $\mathcal{P}$ and a smooth energy function $\mathcal{E}\colon \mathcal{P}\rightarrow \mathbb{R}$. Consider
\begin{equation*}
\begin{split}
\inf_{\rho,\Phi}~~&\int_0^1\Big\{\int \int \nabla_x\nabla_y\delta^2\mathcal{E}(\rho)(x,y)\nabla\Phi(t,x)\nabla\Phi(t,y)\rho(t,x)\rho(t,y)dxdy\\
&\qquad+\int\nabla_x^2\delta\mathcal{E}(\rho)(x)(\nabla\Phi(t,x), \nabla\Phi(t,x))\rho(t,x)dx\Big\} dt,
\end{split}
\end{equation*}
where $\delta$, $\delta^2$ are $L^2$ first, second variation operators respectively,  and the infimum is taken among all possible density paths and potential functions $(\rho, \Phi)\colon [0,1]\times M\rightarrow\mathbb{R}^2$, such that the continuity equation with gradient drift vector fields holds
\begin{equation*}
\partial_t\rho+\nabla\cdot(\rho\nabla\Phi)=0,\qquad \textrm{fixed $\rho^0$, $\rho^1$.}
\end{equation*}  
We notice that if $\mathcal{E}(\rho)=\frac{1}{2}\int x^2\rho dx$, then {the proposed variational formulation recovers the Benamou-Brenier formula \cite{BB}, so as the Wasserstein-$2$ metric \cite{Villani2009_optimal}}. In other words, the above formulation forms 
\begin{equation*}
\begin{split}
\inf_{\rho,\Phi}~~\Big\{\int_0^1\int\|\nabla\Phi(t,x)\|^2\rho(t,x)dxdt\colon\partial_t\rho+\nabla\cdot(\rho\nabla\Phi)=0,~\textrm{fixed $\rho^0$, $\rho^1$}\Big\}. 
\end{split}
\end{equation*}
In this sense, the transport Hessian metric contains and extends the Wasserstein-$2$ metric. 

Here the result belongs to the study of transport information geometry (TIG) founded in \cite{LiG}. TIG is a newly emerged area that applies optimal transport, information geometry, and differential geometry methods to study mathematical physics equations' variational formulations. Nowadays, it has vast applications in constructing fluid dynamics, proving functional inequalities, and designing algorithms; see \cite{CL, QL, LL, LiG1, LLW, LiG2, LiG3, LiG4}. In this paper, we extend the area of TIG into the category of Hessian geometry. One direct application is to formulate optimization techniques for Bayesian sampling problems \cite{YL, YL1}. See related developments in information geometry \cite{MP1, MP2}. In the current paper, we discuss the connections between transport Hessian metrics and mathematical physics equations. We also demonstrate the relation between transport Hessian gradient flows and Stein variational sampling algorithms. 

{Compared to our previous works, we observe that the Wasserstein-$2$ metric is a Hessian metric of second moment functional in the transport density manifold. We generalize this observation to general Hessian metrics in Wasserstein-$2$ space and derive Hamiltonian flows therein.}

This paper is arranged as follows. Section \ref{section2} establishes formulations of transport Hessian metrics and associated distance functions.  
We derive transport Hessian gradient flows in section \ref{section4}, whose connections with Stein gradient flows are shown in section \ref{section6}. In particular, we show that a particular mean field kernel version of Stein variational gradient flow {leads to} the transport Newton's flow. 
We present Hessian Hamiltonian flows in section \ref{section5} and point out the connection between Boltzmann--Shanon entropy and Shallow water equations. {Several closed-form solutions of transport Hessian distances are presented in section \ref{section7}.} 

\section{Optimal transport Hessian metric}\label{section2}
In this section, we briefly review the definition of the Hessian metric on a finite-dimensional {sample space. We} recall the formulation of the optimal transport metric defined in probability density space. Using these concepts, we introduce the Hessian metric of {an} energy in probability density space with Wasserstein-$2$ metric. Several examples are provided. 
\subsection{Finite dimensional Hessian metric}
Let $(M, g)$ be a smooth, compact, $d$-dimensional Riemannian manifold without boundaries, where $g$ is a metric tensor of $M$. For concreteness, let $M=\mathbb{T}^{d}$, where $\mathbb{T}^d$ represents a $d$ dimensional torus. 
In this case, denote {by} $\nabla$ the Euclidean gradient operator. Here the tangent space and the cotangent space at $x\in M$ form 
\begin{equation*}
T_xM=\big\{\dot x\in \mathbb{R}^d\big\}, \quad T_x^*M=\big\{p=g(x)^{-1}\dot x\in \mathbb{R}^d\big\}.
\end{equation*}
We consider a strictly convex energy function $E\colon M\rightarrow \mathbb{R}$. We will study the Hessian metric of $E$ in $(M,g)$. There are several equivalent formulations of Hessian operators defined on $(M, g)$.  On the one hand, the Hessian operator of $E$ in $(M, g)$ can be defined on the tangent space, i.e. 
\begin{equation*}
\textrm{Hess}_gE\colon M\times T_xM\times T_xM\rightarrow \mathbb{R},\quad \textrm{Hess}_gE=\Big((\textrm{Hess}_gE)_{ij}\Big)_{1\leq i,j\leq d}\in \mathbb{R}^{d\times d}.
\end{equation*}
Here 
\begin{equation*}
(\textrm{Hess}_gE(x))_{ij}=(\nabla^2E(x))_{ij}-\sum_{k=1}^d\nabla_{x_k}E(x)\Gamma^k_{ij}(x),
\end{equation*}
where $\Gamma^k_{ij}\colon M\rightarrow\mathbb{R}$ is the Christoffel symbol defined by 
\begin{equation*}
\Gamma^k_{ij}(x)=\frac{1}{2}\sum_{k'=1}^d(g(x)^{-1})_{kk'}\Big(\nabla_{x_i}g_{jk'}(x)+\nabla_{x_j}g_{ik'}(x)-\nabla_{x_{k'}}g_{ij}(x)\Big).
\end{equation*}
We note that in our example, $\Gamma^k_{ij}$ represents the torsion free Levi-Civita connection, i.e. $\Gamma_{ij}^k=\Gamma_{ji}^k$. On the other hand, the Hessian operator can be formulated as a bilinear form on the cotangent space. Denote 
$$\textrm{Hess}^*_gE\colon T^*_xM\times T^*_xM\rightarrow \mathbb{R},\quad \textrm{Hess}^*_gE=\Big((\textrm{Hess}_g^*E)_{ij}\Big)_{1\leq i,j\leq d}\in \mathbb{R}^{d\times d},$$ then 
\begin{equation*}
\dot x^{\ts}\textrm{Hess}_gE(x)\dot x=p^{\ts}\textrm{Hess}^*_gE(x) p,
\end{equation*}
where $p=g(x)^{-1}\dot x$, for any $\dot x\in T_xM$. In other words, we have
\begin{equation*}
\textrm{Hess}_g^*E(x)=g(x)^{-1}\textrm{Hess}_gE(x)g(x)^{-1}.
\end{equation*}
Hence the Hessian metric of an energy function $E$ in $(M,g)$ is defined as follows:
\begin{equation*}
 g^\hh(x)=\textrm{Hess}_gE(x)=g(x) \textrm{Hess}_g^*E(x) g(x). 
\end{equation*}
{From now on, we assume $g^{\hh}$ is semi-positive definite. Here $(M,  g^\hh)$ is named Hessian manifold}. Later on, we will use the above two formulations of the Hessian metric, depending on which is more convenient.   
\subsection{Optimal transport metric}
We next present the Wasserstein-$2$ metric and demonstrate its associated Hessian operator of a functional. 

Let $(M, g)=(\mathbb{T}^d, \mathbb{I})$ be a $d$ dimensional torus, where $\mathbb{I}\in \mathbb{R}^{d\times d}$ is an identity matrix and $(\cdot, \cdot)$ denotes the Euclidean inner product. Denote a smooth positive probability density space by 
\begin{equation*}
\mathcal{P}=\Big\{\rho\in C^{\infty}(M)\colon \int \rho dx=1,\quad \rho> 0\Big\}.
\end{equation*}
The tangent space at $\rho \in \mathcal{P}$ is given by 
\begin{equation*}
T_\rho \mathcal{P}=\Big\{\sigma\in C^\infty(M)\colon \int \sigma dx=0\Big\}.
\end{equation*}

To define the optimal transport metric in the probability space, we need the following notation convention. Define a weighted Laplacian operator by 
\begin{equation*}
\Delta_a=\nabla\cdot(a\nabla), 
\end{equation*}
where $a\in C^{\infty}(M)$ is a given smooth function. Here for any testing functions $f_1$, $f_2\in C^{\infty}(M)$, we have
\begin{equation*}
\int (f_1, \Delta_a f_2) dx= -\int (\nabla f_1, \nabla f_2)a dx. 
\end{equation*}
Later on, we will use this weighted Laplacian operator to define both the metric and the Levi-Civita connection in probability density space. Here for the metric operator, $a$ is chosen as a non-negative probability density function. For the Levi-Civita connection, $a$ is selected as a tangent vector in probability density space, which is not a non-negative function.  

We are ready to present the transport metric. 
\begin{definition}[Transport metric]\label{metric}
  The inner product $\g\colon \mathcal{P}\times {T_\rho}\mathcal{P}\times{T_\rho}\mathcal{P}\rightarrow\mathbb{R}$ is defined by  
  \begin{equation*} 
 \g(\rho)(\sigma_1, \sigma_2)=\int \big(\sigma_1, (-\Delta_\rho)^{-1}\sigma_2\big) dx,
  \end{equation*}
  for any $\sigma_1,\sigma_2\in T_\rho\mathcal{P}$. Here $\Delta_\rho=-\nabla\cdot(\rho\nabla)$ is an elliptic operator weighted linearly by density function $\rho$. On the other hand, denote $\sigma_i=-\Delta_\rho \Phi_i=-\nabla\cdot(\rho\nabla\Phi_i)$, $i=1,2$, then 
  \begin{equation*}
  \begin{split}
   \g(\rho)(\sigma_1, \sigma_2)=&\int \big(\Phi_1, (-\Delta_\rho)(-\Delta_\rho)^{-1}(-\Delta_\rho)\Phi_2\big) dx\\
=&\int \big(\Phi_1, -\nabla\cdot(\rho\nabla\Phi_2)\big) dx\\
=&\int (\nabla\Phi_1, \nabla\Phi_2)\rho dx. 
\end{split}
  \end{equation*} 
  \end{definition}
  In this case, $(\mathcal{P}, \g)$ forms an infinite-dimensional Riemannian manifold, named transport density manifold \cite{Lafferty}. We comment that the optimal transport metric induces a distance function, which has other formulations, such as a linear programming problem with a given ground cost function or a mapping formulation, named Monge problems \cite{Villani2009_optimal}. Throughout this paper, we only use the metric formulation of optimal transport formulated in Definition \ref{metric}.   
  
We next introduce the Hessian operator in transport density manifold. Denote {by} $\delta$, $\delta^2$ the $L^2$ first and second variation operators, respectively. Here the cotangent space at $\rho\in\mathcal{P}$ satisfies
\begin{equation*}
 T_\rho^*\mathcal{P}=\Big\{\Phi=(-\Delta_\rho)^{-1}\sigma\in C^{\infty}(M)\colon \textrm{$\Phi$ is uniquely determined by a constant shrift}\Big\}.
\end{equation*}
Given a smooth functional $\mathcal{E}\colon \mathcal{P}\rightarrow \mathbb{R}$, the Hessian operator of $\mathcal{E}$ in $(\mathcal{P}, \g)$ has the following two formulations. 
On the one hand, the Hessian operator of $\mathcal{E}$ in $(\mathcal{P}, \g)$ can be defined on the tangent space, i.e. 
$$\textrm{Hess}_\g\mathcal{E}\colon \mathcal{P}\times T_\rho\mathcal{P}\times T_\rho\mathcal{P}\rightarrow \mathbb{R},$$ 
which satisfies 
\begin{equation}\label{Hess}
\begin{split}
\textrm{Hess}_\g\mathcal{E}(\rho)(\sigma_1, \sigma_2)=&\int \int \delta^2\mathcal{E}(\rho)\sigma_1(x)\sigma_2(y)dxdy-\int \delta\mathcal{E}(\rho)(x)\mathbf{\Gamma}(\rho)(\sigma_1,\sigma_2)(x) dx,
\end{split}
\end{equation}
where $\mathbf{\Gamma}\colon \mathcal{P}\times T_\rho\mathcal{P}\times T_\rho\mathcal{P}\rightarrow\mathbb{R}$ is the Christoffel symbol in $(\mathcal{P}, \g)$ defined by 
\begin{equation}\label{WC}
\begin{split}
{\bf\Gamma}(\rho)(\sigma_1,\sigma_2)=&-\frac{1}{2}\Big\{\Delta_{\sigma_1}\Delta_{\rho}^{-1}\sigma_2+\Delta_{\sigma_2}\Delta_{\rho}^{-1}\sigma_1+\Delta_\rho(\nabla\Delta_\rho^{-1}\sigma_1, \nabla\Delta_\rho^{-1}\sigma_2)  \Big\}.
\end{split}
\end{equation}
We note that $\mathbf{\Gamma}$ represents the torsion free Levi-Civita connection in transport density manifold, i.e. $$\mathbf{\Gamma}(\rho)(\sigma_1, \sigma_2)(x)=\mathbf{\Gamma}(\rho)(\sigma_2,\sigma_1)(x).$$ On the other hand, the Hessian operator can be formulated as a bilinear form on the cotangent space. Denote 
\begin{equation*}
\textrm{Hess}^*_\g \mathcal{E}\colon \mathcal{P}\times T^*_\rho\mathcal{P}\times T^*_\rho\mathcal{P}\rightarrow \mathbb{R},
\end{equation*}
and
\begin{equation*}
\sigma_i=-\nabla\cdot(\rho\nabla\Phi_i),\quad i=1,2. 
\end{equation*}
Then by direct calculations, we have
\begin{equation}\label{formula}
\begin{split}
\textrm{Hess}_\g^*\mathcal{E}(\rho)(\Phi_1, \Phi_2)=&\int \int \delta^2\mathcal{E}(\rho)(x,y)(-\Delta_\rho\Phi_1)(x)(-\Delta_\rho\Phi_2)(y)dxdy\\
&-\int \delta\mathcal{E}(\rho)(x)\mathbf{\Gamma}(\rho)(-\Delta_\rho\Phi_1, -\Delta_\rho\Phi_2)(x)dx\\
=&\int \int \nabla_x\nabla_y\delta^2\mathcal{E}(\rho)(x,y)\nabla\Phi_1(x)\nabla\Phi_2(y)\rho(x)\rho(y)dxdy\\
&+\int\nabla_x^2\delta\mathcal{E}(\rho)(\nabla\Phi_1(x), \nabla\Phi_2(x))\rho(x)dx.
\end{split}
\end{equation}
Here the second equality holds by integration by parts formula. In particular, the {direct calculation} of Christoffel symbol \eqref{WC} in transport density manifold \cite{LiG} shows that 
\begin{equation*}
\begin{split}
\int \delta\mathcal{E}(\rho)(x)\mathbf{\Gamma}(\rho)(\sigma_1,\sigma_2)(x) dx=&-\frac{1}{2}\int \delta\mathcal{E}(\rho)(x)\Big\{\nabla\cdot(\nabla\cdot(\rho\nabla\Phi_1)\nabla\Phi_2)+\nabla\cdot(\nabla\cdot(\rho\nabla\Phi_2)\nabla\Phi_1)\\
&\hspace{2.7cm} +\nabla\cdot(\rho\nabla(\nabla\Phi_1,\nabla\Phi_2))\Big\} dx\\
=&\frac{1}{2}\int\Big\{-\big(\nabla(\nabla\delta\mathcal{E}, \nabla\Phi_1), \nabla\Phi_2\big)-\big(\nabla(\nabla\delta\mathcal{E}, \nabla\Phi_2), \nabla\Phi_1\big)\\
&\hspace{2.7cm} +\big(\nabla \delta\mathcal{E}(\rho), \nabla(\nabla\Phi_1,\nabla\Phi_2)\big)\Big\} \rho(x)dx\\
=&-\int \nabla_x^2\delta\mathcal{E}(\rho)(\nabla\Phi_1(x),\nabla\Phi_2(x))\rho(x)dx.
\end{split}
\end{equation*}
Similar to the finite dimensional case, the Hessian operator in {transport} density manifold has the following relation:
\begin{equation*}
\textrm{Hess}_\g\mathcal{E}(\rho)=\Delta_\rho^{-1}\cdot \textrm{Hess}_{\g}^*\mathcal{E}(\rho)\cdot \Delta_\rho^{-1},
\end{equation*}
 where the operator $\cdot$ is in the sense of $L^2$ inner product. 

\subsection{Optimal transport Hessian metric}
We are now ready to present the transport Hessian metric. 
\begin{definition}[Transport Hessian metric]
  The inner product $ \g^\hh(\rho)\colon
  \mathcal{P}\times {T_\rho}\mathcal{P}\times{T_\rho}\mathcal{P}\rightarrow\mathbb{R}$ is defined by  
  \begin{equation*} 
  \g^\hh(\rho)(\sigma_1, \sigma_2)=\textrm{Hess}_\g\mathcal{E}(\rho)(\sigma_1,\sigma_2),
  \end{equation*}
  for any $\sigma_1,\sigma_2\in T_\rho\mathcal{P}$. Here $\textrm{Hess}_\g$ is defined by \eqref{Hess}. In details, 
  \begin{equation*}
  \begin{split}
     \g^\hh(\rho)(\sigma_1, \sigma_2)=& \int \int \nabla_x\nabla_y\delta^2\mathcal{E}(\rho)(x,y)(\nabla\Phi_1(x), \nabla\Phi_2(y))\rho(x)\rho(y)dxdy\\
&+\int\nabla_x^2\delta\mathcal{E}(\rho)(\nabla\Phi_1(x), \nabla\Phi_2(x))\rho(x)dx,
  \end{split}
  \end{equation*} 
  where 
  \begin{equation*}
    \sigma_i=-\nabla\cdot(\rho\nabla\Phi_i), \qquad i=1,2. 
  \end{equation*}  
\end{definition}
Here $ \g^\hh_\rho$ is the Hessian metric of energy $\mathcal{E}$ in optimal transport. For this reason, we call $(\mathcal{P},  \g^\hh(\rho))$ the {\em Hessian density manifold}. 
From now on, we only consider the case that $\ g^\hh(\rho)$ is a positive definite operator for all $\rho\in \mathcal{P}$. In other words, we assume that $\mathcal{E}(\rho)$ is geodesically convex in $(\mathcal{P}, \g)$. {These functionals $\mathcal{E}$ are well studied in optimal transport theory, namely displacement convex functionals. See details in \cite[Chapter 9]{AGS}. } 

We next introduce that the transport Hessian metric of {a displacement convex functional} induces a distance function, $\mathrm{Dist}_{\mathrm{H}}\colon \mathcal{P}\times \mathcal{P}\rightarrow \mathbb{R}$. 
Here $\mathrm{Dist}_{\mathrm{H}}$ can be given by the following action functional in $(\mathcal{P},  \g^\hh(\rho))$:
\begin{equation*}
\mathrm{Dist}_{\mathrm{H}}(\rho^0,\rho^1)^2:=\inf_{\rho\colon [0,1]\rightarrow\mathcal{P}}~~\Big\{\int_0^1 \g^\hh(\rho)(\partial_t\rho, \partial_t\rho) dt\colon~~\textrm{fixed $\rho^0$, $\rho^1$}\Big\},
\end{equation*}
where the infimum is taken among all density paths $\rho\colon [0,1]\times M\rightarrow \mathbb{R}$. In other words, we arrive at the following definition of distance function.  
\begin{definition}[Transport Hessian distance]
\begin{equation}\label{p1}
\begin{split}
\mathrm{Dist}_{\mathrm{H}}(\rho^0,\rho^1)^2=\inf_{\rho,\Phi}~~&\int_0^1\Big\{\int \int \nabla_x\nabla_y\delta^2\mathcal{E}(\rho)(x,y)\nabla\Phi(t,x)\nabla\Phi(t,y)\rho(t,x)\rho(t,y)dxdy\\
&\qquad+\int\nabla_x^2\delta\mathcal{E}(\rho)(\nabla\Phi(t,x), \nabla\Phi(t,x))\rho(t,x)dx\Big\} dt,
\end{split}
\end{equation}
where the infimum is taken among all possible density and potential functions $(\rho, \Phi)\colon [0,1]\times M\rightarrow\mathbb{R}^2$, such that the continuity equation with gradient drift vector field holds
\begin{equation*}
\partial_t\rho+\nabla\cdot(\rho\nabla\Phi)=0,\quad \textrm{fixed $\rho^0$, $\rho^1$.}
\end{equation*}
\end{definition}

{\begin{remark}[Connections with Wasserstein-2 distance]
We notice that if $\mathcal{E}(\rho)=\int \frac{x^2}{2}\rho(x)dx$, then variational formulation \eqref{p1} leads to the Benamou-Breiner formula \cite{BB}, which is the dynamical formulation of Wasserstein-2 distance \cite[Chapter 8]{AGS}. I.e. 
\begin{equation*}
\begin{split}
\mathrm{Dist}_{\mathrm{H}}(\rho^0,\rho^1)^2=\inf_{\rho,\Phi}~~&\int_0^1\int \|\nabla\Phi\|^2\rho dx dt,
\end{split}
\end{equation*}
where the infimum is taken among the following constraint 
\begin{equation*}
\partial_t\rho+\nabla\cdot(\rho\nabla\Phi)=0,\quad \textrm{fixed $\rho^0$, $\rho^1$.}
\end{equation*}
In this case, $\mathrm{Dist}_{\mathrm{H}}$ recovers the classical Wasserstein-2 distance. 
\end{remark}}
{\begin{remark}[Connections with Lagrangian coordinates]
The proposed Hessian transport metrics have the Lagrangian formulations as in \cite{AGS}. We leave detailed studies in the future work. 
\end{remark}}

\subsection{Examples}
We present several examples of transport Hessian metrics. For the simplicity of presentation, several transport Hessian metrics are given directly; see their derivations in \cite{Villani2009_optimal} and many references therein. 
In later on examples, we always denote 
\begin{equation*}
\Phi_i\in T_\rho^*\mathcal{P}\quad\textrm{s.t.}\quad \sigma_i=-\nabla\cdot(\rho\nabla\Phi_i)\in T_\rho\mathcal{P},\quad i=1,2.
\end{equation*}
\begin{example}[Linear energy]
Consider 
\begin{equation*}
\mathcal{E}(\rho)=\int E(x)\rho(x)dx, 
\end{equation*}
where $E\in C^{\infty}(M)$ is a given strictly convex potential function. Then $\delta\mathcal{E}(\rho)(x)=E(x)$ and $\delta^2\mathcal{E}(\rho)(x,y)=0$. Hence \begin{equation*}
 \g^\hh(\rho)(\sigma_1, \sigma_2)=\int \nabla^2E(x)(\nabla\Phi_1(x),\nabla\Phi_2(x))\rho(x)dx. 
\end{equation*}
In particular, if $E(x)=\frac{x^2}{2}$, then
\begin{equation*}
 \g^\hh(\rho)(\sigma_1, \sigma_2)=\int (\nabla\Phi_1(x),\nabla\Phi_2(x))\rho(x)dx.
\end{equation*}
In this case, our transport Hessian metric is exactly the Wasserstein-$2$ metric; see \cite{Villani2009_optimal}. Hence the transport Hessian metric contains and extends the formulation of the optimal transport metric. 
\end{example}
\begin{example}[Interaction energy]
Consider 
\begin{equation*}
\mathcal{E}(\rho)=\frac{1}{2}\int \int W(x,y)\rho(x)\rho(y)dxdy, 
\end{equation*}
where $W\in C^{\infty}(M\times M)$ is a given kernel potential function. Then $\delta\mathcal{E}(\rho)(x)=\int W(x,y)\rho(y)dy$ and $\delta^2\mathcal{E}(\rho)(x,y)=W(x,y)$. Hence 
\begin{equation*}
\begin{split}
 \g^\hh(\rho)(\sigma_1, \sigma_2)=&\int \int \Big[\nabla_{xy}^2W(x,y)(\nabla_x\Phi_1(x),\nabla_y\Phi_2(y))\\
&\qquad+\nabla_x^2W(x,y)(\nabla\Phi_1(x), \nabla\Phi_2(x))\Big]\rho(x)\rho(y)dxdy.
\end{split}
\end{equation*}
A concrete examples is given as follows: If $W(x,y)=\frac{\|x-y\|^2}{2}$, then $-\nabla_{xy}^2W(x,y)=\nabla_x^2W(x,y)=\mathbb{I}$. Hence
\begin{equation*}
\begin{split}
 \g^\hh(\rho)(\sigma_1, \sigma_2)=&\int (\nabla\Phi_1(x), \nabla\Phi_2(x))\rho dx-\Big(\int\nabla\Phi_1(y)\rho(y)dy, \int\nabla\Phi_2(y)\rho(y)dy\Big) \\
=&\int \Big(\nabla\Phi_1(x)-\int \nabla\Phi_1(y)\rho(y)dy,   \nabla\Phi_2(x)-\int \nabla\Phi_2(y)\rho(y)dy\Big)\rho(x)dx.
\end{split}
\end{equation*}
\end{example}

\begin{example}[Entropy]
Consider 
\begin{equation*}
\mathcal{E}(\rho)=\int f(\rho)(x) dx,
\end{equation*}
where $f\in C^{\infty}(\mathbb{R})$ is a given strict convex function. In this case, $\delta_\rho\mathcal{E}(\rho)(x)=f'(\rho)(x)$ and $\delta^2\mathcal{E}(\rho)(x,y)=f''(\rho)\delta(x-y)$, where $\delta(x-y)$ is a delta function. Hence 
\begin{equation}\label{ent}
\begin{split}
 \g^\hh(\rho)(\sigma_1, \sigma_2)=&\int \Big\{f''(\rho)(x)\nabla\cdot(\rho(x)\nabla\Phi_1(x))\nabla\cdot(\rho(x)\nabla\Phi_2(x))\\
&\qquad+ \nabla^2f'(\rho)(x)(\nabla\Phi_1(x), \nabla\Phi_2(x))\rho(x)\Big\}dx\\
=&\int \mathrm{tr}(\nabla^2\Phi_1(x), \nabla^2\Phi_2(x))p(\rho)(x)+(\Delta\Phi_1(x), \Delta\Phi_2(x))p_2(\rho)(x)dx,
\end{split}
\end{equation}
where $\mathrm{tr}$ denotes the matrix trace operator, and functions $p$, $p_2\colon \mathbb{R}\rightarrow \mathbb{R}$ satisfy
\begin{equation*}
p(\rho)=\rho f'(\rho)-f(\rho),\quad p_2(\rho)=\rho p'(\rho)-p(\rho).
\end{equation*}
Several concrete examples are provided as follows:
\begin{itemize}
\item[(i)] If $f(\rho)=\rho\log\rho$, then $-\int \rho\log\rho dx$ is known as the {Boltzmann--Shannon entropy}. Here $f'(\rho)=\log\rho+1$, $f''(\rho)=\frac{1}{\rho}$, hence $p(\rho)=\rho$, $p_2(\rho)=0$. Then 
\begin{equation}\label{SE}
 \g^\hh(\rho)(\sigma_1, \sigma_2)=\int \mathrm{tr}(\nabla^2\Phi_1(x),\nabla^2\Phi_2(x)) \rho(x) dx.
\end{equation}
\item[(ii)] If $f(\rho)=\frac{1}{2}\rho^2$, then $\mathcal{E}(\rho)=\frac{1}{2}\int\rho^2 dx$. Here $f'(\rho)=\rho$, $f''(\rho)=1$. Hence $p(\rho)=p_2(\rho)=\frac{1}{2}\rho^2$. Then
\begin{equation*}
 \g^\hh(\rho)(\sigma_1, \sigma_2)=\frac{1}{2}\int \Big(\mathrm{tr}(\nabla^2\Phi_1(x), \nabla^2\Phi_2(x))+(\Delta\Phi_1(x), \Delta\Phi_2(x))\Big)\rho(x)^2dx.
\end{equation*}
\end{itemize}
\end{example}
\begin{remark}
If $(M, g)$ is a Riemannian manifold, then Hessian operator in \eqref{ent} also contains the Ricci curvature tensor on $(M, g)$. For the simplicity of presentation, we assume that manifold $(M, g)$ is Ricci flat, i.e. $\textrm{Ric}_M=0$.  
\end{remark}
\begin{remark}
We notice that the second equation in formula \eqref{ent} has been formulated in \cite{Villani2009_optimal}; where the first equality equals to the second equality can be shown by using Wasserstein Christoffel symbol \eqref{WC}; see \cite{LiG, LiG1}. This fact relates to the construction of Bakry--{\'E}mery Gamma calculus; see related works in \cite{BE, QL, LiG1}. 
\end{remark}
\begin{remark}
We remark that in one dimensional sample space, the transport Hessian metric defined in \eqref{SE} coincides with the semi--invariant metric studied in \cite{Modin1}. 
\end{remark}
\section{Transport Hessian gradient flows}\label{section4}
In this section, we derive gradient flows in the Hessian density manifold and provide the associated entropy dissipation properties. Several examples are given. 

We first derive the gradient flow in Hessian density manifold $(\mathcal{P},  \g^\hh)$. From now on, we consider a smooth energy $\mathcal{F}\colon \mathcal{P}\rightarrow\mathbb{R}$.
\begin{theorem}[Transport Hessian gradient flow]\label{thm4}
The gradient flow of $\mathcal{F}(\rho)$ in $(\mathcal{P},  \g^\hh)$ satisfies 
\begin{equation}\label{tgd}
\partial_t\rho(t,x)=\nabla\cdot(\rho(t,x)\nabla\Phi^{\mathcal{F}}(x,\rho)),
\end{equation}
where $\Phi^{\mathcal{F}}\in C^\infty(M\times \mathcal{P})$ satisfies the following Poisson equation: 
\begin{equation}\label{PO}
\begin{split}
&\nabla_x\cdot(\rho(t,x) \nabla_x\int \nabla_y\delta^2\mathcal{E}(\rho)(x,y)\nabla_y{\Phi^\mathcal{F}}(y,\rho)\rho(t,y)dy)+\nabla_x\cdot(\rho(t,x)\nabla_x^2\delta\mathcal{E}(\rho)(x)\nabla_x\Phi^\mathcal{F}(x,\rho))\\
=&\nabla_x\cdot(\rho(t,x)\nabla_x\delta\mathcal{F}(\rho)(x)).
\end{split}
\end{equation}
\end{theorem}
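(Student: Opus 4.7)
The plan is to derive \eqref{tgd}--\eqref{PO} directly from the Riemannian characterization of a gradient descent flow on $(\mathcal{P},\g^{\hh})$. By definition, $\rho(t,\cdot)$ is the $\g^{\hh}$-gradient flow of $\mathcal{F}$ if and only if
\begin{equation*}
\g^{\hh}(\rho)(\partial_t\rho,\sigma)=-\int \delta\mathcal{F}(\rho)(x)\,\sigma(x)\,dx \qquad\text{for every } \sigma\in T_\rho\mathcal{P}.
\end{equation*}
I would parameterize the velocity as $\partial_t\rho=\nabla\cdot(\rho\nabla\Phi^{\mathcal{F}})$ and an arbitrary test direction as $\sigma=-\nabla\cdot(\rho\nabla\Psi)$, so that in the cotangent notation of \eqref{formula} the associated potentials are $-\Phi^{\mathcal{F}}$ and $\Psi$, respectively.

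Substituting into \eqref{formula} and integrating the right-hand side once by parts, the two minus signs produced by $\Phi_1=-\Phi^{\mathcal{F}}$ cancel, and the defining identity reduces to
\begin{equation*}
\begin{split}
&\iint \nabla_x\nabla_y\delta^2\mathcal{E}(\rho)(x,y)\,\nabla\Phi^{\mathcal{F}}(x)\,\nabla\Psi(y)\,\rho(x)\rho(y)\,dxdy +\int \nabla_x^2\delta\mathcal{E}(\rho)(\nabla\Phi^{\mathcal{F}},\nabla\Psi)\,\rho\,dx\\
&\qquad=\int \nabla\delta\mathcal{F}(\rho)\cdot\nabla\Psi\,\rho\,dx.
\end{split}
\end{equation*}
The next step is to push every remaining derivative off $\Psi$ via further integrations by parts. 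The single integral on the left yields $-\int\nabla\cdot(\rho\,\nabla_x^2\delta\mathcal{E}(\rho)\nabla\Phi^{\mathcal{F}})\,\Psi\,dx$, while the right-hand side yields $-\int\nabla\cdot(\rho\nabla\delta\mathcal{F}(\rho))\,\Psi\,dx$. For the double integral, I would first integrate by parts in $y$ to obtain $-\int\nabla_y\cdot(\rho(y)V(y))\,\Psi(y)\,dy$ where $V(y):=\int\nabla_x\nabla_y\delta^2\mathcal{E}(\rho)(x,y)\nabla\Phi^{\mathcal{F}}(x)\rho(x)\,dx$. The key algebraic observation is that $V$ is itself a gradient, $V(y)=\nabla_y\!\int\nabla_x\delta^2\mathcal{E}(\rho)(x,y)\cdot\nabla\Phi^{\mathcal{F}}(x)\rho(x)\,dx$; invoking the symmetry $\delta^2\mathcal{E}(\rho)(x,y)=\delta^2\mathcal{E}(\rho)(y,x)$ and relabeling the dummy variable then rewrites the double integral as
\begin{equation*}
-\int\nabla_x\cdot\Bigl(\rho(x)\nabla_x\!\int\nabla_y\delta^2\mathcal{E}(\rho)(x,y)\cdot\nabla\Phi^{\mathcal{F}}(y)\rho(y)\,dy\Bigr)\,\Psi(x)\,dx,
\end{equation*}
which is precisely the negative of the first divergence term in \eqref{PO} integrated against $\Psi$.

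Equating integrands by the fundamental lemma of the calculus of variations---valid because $\Psi$ ranges over all smooth functions on $M$ modulo constants, so that $\sigma=-\nabla\cdot(\rho\nabla\Psi)$ exhausts $T_\rho\mathcal{P}$---then produces \eqref{PO}. The main obstacle is purely bookkeeping: keeping signs consistent through two chains of integration by parts, and recognizing the identity $V(y)=\nabla_y[\nabla_x\delta^2\mathcal{E}(\rho)(x,y)\cdot\nabla\Phi^{\mathcal{F}}(x)]$ (weighted by $\rho(x)\,dx$), which is what allows the mixed double integral to be reorganized as the divergence of a gradient in the outer variable. This identity rests on the symmetry of $\delta^2\mathcal{E}(\rho)$ in its two spatial arguments and is the only substantive point beyond routine calculus.
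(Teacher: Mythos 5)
Your proposal is correct and follows essentially the same route as the paper's proof: both characterize the flow through the pairing identity $\g^\hh(\rho)(\partial_t\rho,\sigma)=-\int\delta\mathcal{F}(\rho)\sigma\,dx$, substitute the cotangent formula \eqref{formula}, integrate by parts to strip all derivatives from the arbitrary test potential, and conclude \eqref{PO} from the arbitrariness of that potential. The only difference is cosmetic bookkeeping---the paper first isolates $\mathrm{grad}_{\g^\hh}\mathcal{F}(\rho)=-\nabla\cdot(\rho\nabla\Phi^{\mathcal{F}})$ as a separate claim, whereas you absorb the descent sign into the weak formulation directly; your sign tracking and the reorganization of the mixed double integral are both correct.
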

\begin{proof}
The proof follows the definition of the gradient operator.

\noindent\textbf{Claim:} The gradient operator of $\mathcal{F}(\rho)$ in $(\mathcal{P},  \g^\hh)$, i.e. $\textrm{grad}_{ \g^\hh}\colon \mathcal{P}\times C^\infty(\mathcal{P})\rightarrow T_\rho\mathcal{P}$, is defined by
\begin{equation*}
\textrm{grad}_{ \g^\hh}\mathcal{F}(\rho)(x)=-\nabla\cdot(\rho(x)\nabla{\Phi^\mathcal{F}}(x,\rho)),
\end{equation*}
where $\Phi^\mathcal{F}\in C^\infty(M)$ solves the Poisson equation \eqref{PO}. 

Suppose the claim is true, then the gradient flow follows \begin{equation*}
\partial_t\rho=-\textrm{grad}_{ \g^\hh}\mathcal{F}(\rho),
\end{equation*}
which finishes the proof. Now, we only need to prove the claim as follows. 
\begin{proof}[Proof of Claim]
For any $\sigma(x)\in T_\rho\mathcal{P}$, we have
\begin{equation*}
 \g^\hh(\rho)(\sigma, \textrm{grad}_{ \g^\hh}\mathcal{F}(\rho))=\int \delta\mathcal{F}(\rho)(x)\sigma(x)dx.
\end{equation*}
Denote $\Phi,~\Phi^\mathcal{F}\in C^{\infty}(M\times \mathcal{P})$, such that 
\begin{equation*}
\sigma=-\nabla\cdot(\rho\nabla\Phi), \quad\textrm{and}\quad \textrm{grad}_{ \g^\hh}\mathcal{F}(\rho)=-\nabla\cdot(\rho\nabla\Phi^\mathcal{F}).
\end{equation*}
Then from \eqref{formula}, we obtain
\begin{equation*}
\begin{split}
 \g^\hh(\rho)(\sigma, \textrm{grad}_{ \g^\hh}\mathcal{F}(\rho))=&\textrm{Hess}_\g\mathcal{E}(\rho)(-\nabla\cdot(\rho\nabla\Phi), -\nabla\cdot(\rho\nabla\Phi^\mathcal{F}))\\
=&\textrm{Hess}_\g^*\mathcal{E}(\rho)(\Phi, \Phi^{\mathcal{F}})\\
=&\int \int \nabla_x\nabla_y\delta^2\mathcal{E}(\rho)(x,y)\nabla_x\Phi(x)\nabla_y\Phi^\mathcal{F}(y,\rho)\rho(x)\rho(y)dxdy\\
&+\int\nabla_x^2\delta\mathcal{E}(\rho)(\nabla\Phi(x), \nabla\Phi^\mathcal{F}(x,\rho))\rho(x)dx\\
=&-\int\nabla_x\cdot\Big(\rho(x)\nabla_x\int \nabla_y\delta^2\mathcal{E}(\rho)(x,y)\nabla\Phi^\mathcal{F}(y,\rho)\rho(y)dy\Big) \Phi(x)dx\\
&-\int \nabla_x\cdot(\rho\nabla_x^2\delta\mathcal{E}(\rho)\nabla\Phi^{\mathcal{F}}(x,\rho))\Phi(x)dx,
\end{split}
\end{equation*}
where the last equality follows from the integration by parts formula. Here 
\begin{equation*}
\begin{split}
\int \delta\mathcal{F}(\rho)(x)\sigma(x)dx=&\int \delta\mathcal{F}(\rho)(x)\Big(-\nabla\cdot(\rho\nabla\Phi)(x)\Big)dx\\
=&-\int \Big(\nabla\cdot(\rho\nabla\delta\mathcal{F}(\rho)(x))\Big)\Phi(x)dx,
\end{split}
\end{equation*}
where the last equality holds by the integration by parts formula twice. We notice that the above two formulas equal to each other for any $\sigma\in T_\rho\mathcal{P}$, i.e. for any $\Phi\in C^{\infty}(M)$. This derives the Poisson equation \eqref{PO}.  
\end{proof}
\end{proof}

We next present the following two categories of gradient flows in Hessian density manifold. Firstly, we introduce a class of transport Newton's flows \cite{YL}.  
\begin{corollary}[Transport Newton's flow]
If $\mathcal{E}(\rho)=\mathcal{F}(\rho)$, then the gradient flow of $\mathcal{F}(\rho)$ in Hessian density manifold $(\mathcal{P},  \g^\hh)$ forms
\begin{equation}\label{wnew}
\left\{
\begin{split}
&\partial_t\rho(t,x)=\nabla\cdot(\rho(t,x)\nabla\Phi(x,\rho))\\
&\nabla_x\cdot(\rho(t,x) \nabla_x\int \nabla_y\delta^2\mathcal{E}(\rho)(x,y)\nabla_y\Phi(y,\rho)\rho(t,y)dy)+\nabla_x\cdot(\rho(t,x)\nabla_x^2\delta\mathcal{E}(\rho)(x)\nabla_x\Phi(x,\rho))\\
&=\nabla\cdot(\rho(t,x)\nabla\delta\mathcal{E}(\rho)(x)).
\end{split}\right.
\end{equation}
This is the Newton's flow of $\mathcal{E}(\rho)$ in density manifold $(\mathcal{P}, \g)$.
\end{corollary}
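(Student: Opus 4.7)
The plan is to derive \eqref{wnew} as a direct specialization of Theorem \ref{thm4} and then to interpret the resulting evolution as the Newton flow of $\mathcal{E}$ in the transport density manifold $(\mathcal{P}, \g)$. The first step is a direct substitution: setting $\mathcal{F} = \mathcal{E}$ in \eqref{tgd} and \eqref{PO} makes $\delta\mathcal{F}(\rho) = \delta\mathcal{E}(\rho)$, and the coupled continuity--elliptic system \eqref{wnew} is obtained immediately, with the unknown potential $\Phi^{\mathcal{F}}$ renamed to $\Phi$. No calculation is required for this part.

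The substantive content is the geometric identification with Newton's flow. This rests on the following general principle: on any Riemannian manifold $(N, g_N)$ and for any strictly convex energy $E$, the Newton flow $\dot x = -(\textrm{Hess}_{g_N} E)^{-1}\textrm{grad}_{g_N} E$ is, tautologically, the negative gradient flow of $E$ with respect to the Hessian metric $g_N^{\mathrm{H}} := \textrm{Hess}_{g_N} E$. Indeed, writing $g_N^{\mathrm{H}}(\sigma,v) = \textrm{Hess}_{g_N} E(\sigma,v)$, the weak equation $g_N^{\mathrm{H}}(\sigma, \dot x) = -dE(\sigma)$, to hold for all tangent $\sigma$, simultaneously characterizes (i) the $g_N^{\mathrm{H}}$-gradient flow of $E$ and (ii) $\textrm{Hess}_{g_N} E(\dot x) = -\textrm{grad}_{g_N} E$. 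Applying this in $(\mathcal{P}, \g)$ with energy $\mathcal{E}$ yields precisely the identification stated in the corollary.

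To make the argument concrete rather than abstract, I would write $\partial_t\rho = \nabla\cdot(\rho\nabla\Phi)$, test against an arbitrary $\sigma = -\nabla\cdot(\rho\nabla\Psi)$, expand the left-hand side of the Newton characterization using the cotangent formula \eqref{formula} for $\textrm{Hess}_\g^*\mathcal{E}$, and integrate by parts on the right to rewrite $-\int\delta\mathcal{E}(\rho)\,\sigma\,dx = -\int \nabla\delta\mathcal{E}(\rho)\cdot\nabla\Psi\,\rho\,dx$. Removing the arbitrary test $\Psi$ via a further integration by parts in $x$ produces exactly the elliptic equation for $\Phi$ appearing in the second line of \eqref{wnew}. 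I do not anticipate any serious obstacle; the only place requiring care is the sign bookkeeping, since $\sigma$ is taken with a minus sign in the convention $\sigma = -\nabla\cdot(\rho\nabla\Psi)$ while the continuity equation uses $\partial_t\rho = \nabla\cdot(\rho\nabla\Phi)$. Once this convention is applied consistently on both sides, the identification is automatic.
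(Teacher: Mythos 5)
Your proposal is correct and follows essentially the same route as the paper: substitute $\mathcal{F}=\mathcal{E}$ into Theorem \ref{thm4} to obtain the system \eqref{wnew}, then invoke the tautological identity $-(\textrm{Hess}_{\g}\mathcal{E})^{-1}\textrm{grad}_{\g}\mathcal{E}=-\textrm{grad}_{\g^\hh}\mathcal{E}$ to identify the flow as the transport Newton's flow. The extra weak-formulation verification you sketch merely re-derives the Poisson equation already established in the proof of Theorem \ref{thm4}, so nothing beyond the paper's two-line argument is needed.
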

\begin{remark}
We comment that the Newton's flow in transport density manifold and the Newton's flow in $L^2$ space behave similarly in the asymptotic sense. In other words, we observe that when the density is at the minimizer, i.e. $\rho=\rho^*$, then $\nabla_x\delta\mathcal{E}(\rho^*)=0$. Here the transport Newton's direction forms the $L^2$ Newton's direction, i.e.
\begin{equation*}
\textrm{grad}_{ \g^\hh}\mathcal{E}(\rho)|_{\rho=\rho^*}=\delta^2\mathcal{E}(\rho)^{-1}\delta\mathcal{E}(\rho)|_{\rho=\rho^*}, 
\end{equation*}
This fact demonstrates that the transport Newton's direction is asymptotically a $L^2$--Newton's direction in density space. See related convergence proof of transport Newton's method in \cite{YL1}.
\end{remark}
\begin{proof}
The proof follows from the definition of Newton's direction in a manifold. Notice that the Newton's flow forms 
\begin{equation*}
\begin{split}
\partial_t\rho=&-\Big(\textrm{Hess}_{\g}\mathcal{E}(\rho)\Big)^{-1}\textrm{grad}_{\g}\mathcal{E}(\rho)\\
=&-\textrm{grad}_{ \g^\hh}\mathcal{E}(\rho). 
\end{split}
\end{equation*}
Substituting $\mathcal{F}(\rho)=\mathcal{E}(\rho)$ into equation \eqref{tgd}, we obtain the result. 
\end{proof}

Secondly, we propose a new class of metrics for deriving the classical transport gradient flows. 
\begin{corollary}[Transport gradient flow]
Consider an energy
\begin{equation*}
\mathcal{F}(\rho)=\frac{1}{2}\int \|\nabla_x\delta\mathcal{E}(\rho)(x)\|^2\rho(x)dx.
\end{equation*}
 Then the gradient flow of energy $\mathcal{F}(\rho)$ in Hessian density manifold $(\mathcal{P},  \g^\hh)$ formulates 
\begin{equation*}
\partial_t\rho(t,x)=\nabla_x\cdot(\rho(t,x) \nabla_x\delta\mathcal{E}(\rho)(x)),
\end{equation*}
which is the gradient flow of energy $\mathcal{E}(\rho)$ in density manifold $(\mathcal{P}, \g)$.
\end{corollary}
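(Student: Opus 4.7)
The plan is to recognize the functional $\mathcal{F}$ as an intrinsic Riemannian object on the transport density manifold $(\mathcal{P},\g)$, which reduces the corollary to a general identity about Hessian metrics. Using the Wasserstein gradient formula $\textrm{grad}_\g\mathcal{E}(\rho)=-\nabla\cdot(\rho\nabla\delta\mathcal{E}(\rho))$ together with Definition \ref{metric}, one immediately sees
\[
\mathcal{F}(\rho)\;=\;\frac{1}{2}\int\|\nabla\delta\mathcal{E}(\rho)\|^2\rho\,dx\;=\;\frac{1}{2}\,\g(\rho)\bigl(\textrm{grad}_\g\mathcal{E}(\rho),\,\textrm{grad}_\g\mathcal{E}(\rho)\bigr),
\]
so $\mathcal{F}$ is one-half the squared Wasserstein length of the Wasserstein gradient of $\mathcal{E}$. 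This identification is the crucial observation.

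Next I would apply the standard Riemannian identity: on any manifold with Levi--Civita connection, differentiating $\frac{1}{2}g(\textrm{grad}\,E,\textrm{grad}\,E)$ along a curve equals $\textrm{Hess}\,E(\dot\gamma,\textrm{grad}\,E)$, by metric compatibility. On $(\mathcal{P},\g)$, whose connection is encoded by Christoffel symbol \eqref{WC}, this yields
\[
\int\delta\mathcal{F}(\rho)\,\partial_t\rho\,dx\;=\;\textrm{Hess}_\g\mathcal{E}(\rho)\bigl(\partial_t\rho,\,\textrm{grad}_\g\mathcal{E}(\rho)\bigr)\;=\;\g^{\hh}(\rho)\bigl(\partial_t\rho,\,\textrm{grad}_\g\mathcal{E}(\rho)\bigr).
\]
Comparing with the defining relation $\int\delta\mathcal{F}(\rho)\sigma\,dx=\g^{\hh}(\rho)(\sigma,\textrm{grad}_{\g^{\hh}}\mathcal{F}(\rho))$ for every $\sigma\in T_\rho\mathcal{P}$, and invoking the standing positive-definiteness of $\g^{\hh}$, forces
\[
\textrm{grad}_{\g^{\hh}}\mathcal{F}(\rho)\;=\;\textrm{grad}_\g\mathcal{E}(\rho)\;=\;-\nabla\cdot(\rho\nabla\delta\mathcal{E}(\rho)).
\]
Substituting into the Hessian-metric gradient flow $\partial_t\rho=-\textrm{grad}_{\g^{\hh}}\mathcal{F}(\rho)$ immediately gives $\partial_t\rho=\nabla\cdot(\rho\nabla\delta\mathcal{E}(\rho))$, the transport gradient flow of $\mathcal{E}$ in $(\mathcal{P},\g)$.

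The main difficulty is of a formal nature: the Riemannian derivative identity is standard in finite dimensions but is being invoked here on the infinite-dimensional transport manifold, so one ultimately relies on the formal calculus already adopted throughout the paper. A concrete alternative, more in the explicit style of Theorem \ref{thm4}, is to substitute the ansatz $\Phi^{\mathcal{F}}(x,\rho)=\delta\mathcal{E}(\rho)(x)$ directly into Poisson equation \eqref{PO} and verify both sides coincide. This requires the expansion
\[
\nabla_x\delta\mathcal{F}(\rho)(x)\;=\;\nabla_x^2\delta\mathcal{E}(\rho)(x)\nabla_x\delta\mathcal{E}(\rho)(x)\;+\;\int\nabla_x\nabla_y\delta^2\mathcal{E}(\rho)(y,x)\,\nabla_y\delta\mathcal{E}(\rho)(y)\,\rho(y)\,dy,
\]
followed by the symmetry $\delta^2\mathcal{E}(x,y)=\delta^2\mathcal{E}(y,x)$ and an integration by parts to match term by term with the left-hand side of \eqref{PO}. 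I expect the abstract route to be cleaner and more illuminating, with this direct verification serving as a reassuring sanity check on the bookkeeping.
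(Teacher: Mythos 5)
Your proposal is correct and follows essentially the same route as the paper: both hinge on identifying $\mathcal{F}(\rho)=\frac{1}{2}\g(\rho)(\textrm{grad}_\g\mathcal{E},\textrm{grad}_\g\mathcal{E})$ and on the Riemannian identity that the $\g$-differential of this quantity is $\textrm{Hess}_\g\mathcal{E}(\cdot,\textrm{grad}_\g\mathcal{E})$, so that the $\g^{\hh}$-gradient of $\mathcal{F}$ collapses to $\textrm{grad}_\g\mathcal{E}$. The only cosmetic difference is that you extract $\textrm{grad}_{\g^{\hh}}\mathcal{F}$ by testing against arbitrary $\sigma\in T_\rho\mathcal{P}$ and invoking non-degeneracy, whereas the paper writes $\textrm{grad}_{\g^{\hh}}=(\textrm{Hess}_\g\mathcal{E})^{-1}\textrm{grad}_\g$ and cancels the Hessian explicitly.
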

\begin{proof}
We notice that 
\begin{equation*}
\begin{split}
\mathcal{F}(\rho)=&\frac{1}{2}\int \Big(\delta\mathcal{E}(\rho), -\Delta_\rho\delta\mathcal{E}(\rho)\Big) dx \\
=&\frac{1}{2}\int \|\nabla_x\delta\mathcal{E}(\rho)(x)\|^2\rho(x) dx \\
=&\frac{1}{2} \g(\rho)(\textrm{grad}_\g\mathcal{E}(\rho), \textrm{grad}_\g\mathcal{E}(\rho)).
\end{split}
\end{equation*}
Then 
\begin{equation*}
\begin{split}
\textrm{grad}_\g\mathcal{F}(\rho)=&\textrm{grad}_\g\Big\{\frac{1}{2} \g(\rho)(\textrm{grad}_\g\mathcal{E}(\rho), \textrm{grad}_\g\mathcal{E}(\rho))\Big\}\\
=&\textrm{Hess}_g\mathcal{E}(\rho)\cdot\textrm{grad}_\g\mathcal{E}(\rho).
\end{split}
\end{equation*}
Hence the gradient flow equation of $\mathcal{F}(\rho)$ in $(\mathcal{P},  \g^\hh)$ satisfies 
\begin{equation*}
\begin{split}
\partial_t\rho=&-\textrm{grad}_{ \g^\hh}\mathcal{F}(\rho)\\
=&-\Big(\textrm{Hess}_{\g}\mathcal{E}(\rho)\Big)^{-1}\textrm{grad}_{\g}\mathcal{F}(\rho)\\
=&-\Big(\textrm{Hess}_{\g}\mathcal{E}(\rho)\Big)^{-1}\cdot \textrm{Hess}_g\mathcal{E}(\rho)\cdot \textrm{grad}_\g\mathcal{E}(\rho)\\
=&-\textrm{grad}_{\g}\mathcal{E}(\rho)\\
=&\nabla\cdot(\rho\nabla\delta\mathcal{E}(\rho)),
\end{split}
\end{equation*}
which finishes the proof.
\end{proof}

\subsection{Examples}
In this section, we present several examples of gradient flows in Hessian density manifold, and point out their connections with mathematical physics equations.  

We first present several examples of transport Newton's flows, in which we consider the energy functional $\mathcal{F}(\rho)=\mathcal{E}(\rho)$.
\begin{example}[Transport Newton's flow of linear energy]
 Consider $$\mathcal{E}(\rho)=\int E(x)\rho(x)dx,$$ then the transport Newton's flow forms 
\begin{equation*}
\left\{\begin{split}
&\partial_t\rho(t,x)=\nabla\cdot(\rho(t,x)\nabla\Phi^{\mathcal{E}}(x,\rho))\\
&\nabla\cdot(\rho(t,x)\nabla^2E(x)\nabla\Phi^\mathcal{E}(x,\rho))=\nabla\cdot(\rho(t,x)\nabla E(x)).
\end{split}\right.
\end{equation*}
\end{example}
\begin{example}[Transport Newton's flow of interaction energy]
Consider $$\mathcal{E}(\rho)=\frac{1}{2}\int\int W(x,y)\rho(x)\rho(y)dxdy,$$ 
then the transport Newton's flow satisfies 
\begin{equation*}
\left\{\begin{split}
&\partial_t\rho(t,x)=\nabla\cdot(\rho(t,x)\nabla\Phi^{\mathcal{E}}(x,\rho))\\
&\nabla\cdot\Big(\rho(t,x) \big[\int \nabla^2_{xy}W(x,y)\nabla\Phi^{\mathcal{E}}(y,\rho)\rho(t,y)dy+\int\nabla^2_xW(x,y)\nabla\Phi^{\mathcal{E}}(x,\rho)\rho(t,y)dy\big]\Big)\\
&=\nabla\cdot(\rho(t,x)\nabla\int W(x,y)\rho(t,y)dy).
\end{split}\right.
\end{equation*}
A concrete example of interaction kernel is given as follows. 
If $W(x,y)=\frac{\|x-y\|^2}{2}$, then we obtain 
\begin{equation*}
\left\{\begin{split}
&\partial_t\rho(t,x)=\nabla\cdot(\rho(t,x)\nabla\Phi^{\mathcal{E}}(x,\rho))\\
&\nabla\cdot\Big(\rho(t,x)\big[\nabla_x\Phi^{\mathcal{E}}(x,\rho)-\int\nabla_y\Phi^{\mathcal{E}}(y,\rho)\rho(t,y)dy\big]\Big)=\nabla\cdot(\rho(t,x)\nabla\int\frac{1}{2}\|x-y\|^2\rho(t,y)dy).
\end{split}\right.
\end{equation*}
\end{example}
\begin{example}[Transport Newton's flow of entropy]
Consider 
$$\mathcal{E}(\rho)=\int f(\rho)(x)dx,$$ 
then the transport Newton's flow satisfies
\begin{equation}\label{entropy}
\left\{\begin{split}
&\partial_t\rho(t,x)=\nabla\cdot(\rho(t,x)\nabla\Phi^{\mathcal{E}}(x,\rho))\\
&-\Big\{\nabla^2\colon(p(\rho(t,x))\nabla^2\Phi^{\mathcal{E}}(x,\rho))+\Delta(p_2(\rho(t,x))\Delta \Phi^{\mathcal{E}}(x,\rho))\Big\}=\nabla\cdot(\rho(t,x)\nabla f'(\rho)(x)).
\end{split}\right.
\end{equation}
Here for any function $a\in C^{\infty}(M)$, $\nabla^2\colon(a\nabla^2)$ represents the second order weighted Laplacian operator. In other words, for any testing function $f_1$, $f_2\in C^{\infty}(M)$, we have
\begin{equation*}
\int f_1\nabla^2\colon(a \nabla^2 f_2)dx=\int \mathrm{tr}(\nabla^2f_1,\nabla^2f_2)a dx.
\end{equation*}
Several concrete examples of equation \eqref{entropy} are given as follows.
\begin{itemize}
\item[(i)] If $f(\rho)=\rho\log\rho$, then we have
\begin{equation*}
\left\{\begin{split}
&\partial_t\rho(t,x)=\nabla\cdot(\rho(t,x)\nabla\Phi^{\mathcal{E}}(x,\rho))\\
&-\nabla^2\colon(\rho(t,x)\nabla^2\Phi^{\mathcal{E}}(x,\rho))=\nabla\cdot(\rho(t,x)\nabla\log\rho(t,x))=\Delta\rho(t,x).
\end{split}\right.
\end{equation*}
\item[(ii)] If $f(\rho)=\frac{\rho^2}{2}$, then we obtain 
\begin{equation*}
\left\{\begin{split}
&\partial_t\rho(t,x)=\nabla\cdot(\rho(t,x)\nabla\Phi^{\mathcal{E}}(x,\rho))\\
&-\frac{1}{2}\nabla^2\colon(\rho(t,x)^2\nabla^2\Phi^{\mathcal{E}}(x,\rho))-\frac{1}{2}\Delta(\rho(t,x)^2\Delta \Phi^{\mathcal{E}}(x,\rho))\\
&=\nabla\cdot(\rho(t,x)\nabla \rho(t,x))=\frac{1}{2}\Delta\rho^2(t,x). 
\end{split}\right.
\end{equation*}
\end{itemize}
We notice that equation \eqref{entropy} introduces transport Newton's equations for general entropies. Notice that the transport gradient flows of entropies include the heat equation, the Porous media equation \cite{Villani2009_optimal}, etc. Following this relation, we named the derived equations as {\em Newton's heat equation, Newton's Porous media equation}, etc.
\end{example}

We next present the other connection between transport Hessian metrics and heat equations.  
\begin{example}[Connections with heat equations]
Denote $\mathcal{E}(\rho)$ by the negative Boltzmann--Shannon entropy 
\begin{equation*}
\mathcal{E}(\rho)=\int \rho(x)\log\rho(x)dx.
\end{equation*}
Denote $\mathcal{F}(\rho)$ by the $1/2$ Fisher information functional 
\begin{equation*}
\mathcal{F}(\rho)=\frac{1}{2}\g(\rho)(\textrm{grad}_\g\mathcal{E}(\rho),\textrm{grad}_\g\mathcal{E}(\rho))=\frac{1}{2}\int \|\nabla\log\rho\|^2\rho dx. 
\end{equation*} 
The transport Hessian gradient flow of $1/2$ Fisher information functional forms the heat equation. 
\begin{equation*}
\begin{split}
\partial_t\rho=&-\textrm{grad}_{ \g^\hh}\mathcal{F}(\rho)\\
=&-\nabla\cdot(\rho\nabla)(\nabla^2\colon \rho \nabla^2)^{-1}\nabla\cdot(\rho\nabla)\delta_\rho\mathcal{F}(\rho)\\
=&\nabla\cdot(\rho \nabla)(\nabla^2\colon \rho \nabla^2)^{-1}(-\nabla^2\colon \rho\nabla^2)\delta\mathcal{E}(\rho)\\
=&\nabla\cdot(\rho\nabla\delta\mathcal{E}(\rho))=\nabla\cdot(\rho\nabla\log\rho)\\
=&\Delta\rho.
\end{split}
\end{equation*}
In above, we use the fact \cite{QD} that the gradient operator of Fisher information functional in $(\mathcal{P}, \g)$ satisfies 
\begin{equation*}
\textrm{grad}_{\g}\mathcal{F}(\rho)=\textrm{Hess}_\g\mathcal{E}(\rho)\cdot\textrm{grad}_{\g}\mathcal{E}(\rho)=-\nabla^2\colon(\rho\nabla^2\log\rho).
\end{equation*}
\end{example}
\begin{remark}
We notice that the gradient flow of Fisher information in Wasserstein-$2$ metric is known as the quantum heat equation; see \cite{QD} and many references therein. We summarize that the relation among heat equation, quantum heat equation and Newton's heat equation is as follows. Denote $\mathcal{E}(\rho)=\int \rho\log\rho dx$ and $\mathcal{F}(\rho)=\frac{1}{2}\int \|\nabla\log\rho\|^2\rho dx$, then
 \begin{equation*}
\begin{split}
&\partial_t\rho=-\textrm{grad}_\g\mathcal{F}(\rho)=-\textrm{Hess}_\g\mathcal{E}(\rho)\cdot\textrm{grad}_{\g}\mathcal{E}(\rho); \qquad\qquad{\textrm{Quantum heat equation}}\\
&\partial_t\rho=-\frac{1}{2}\textrm{Hess}_\g\mathcal{E}(\rho)^{-1}\cdot\textrm{grad}_\g\mathcal{F}(\rho)=-\textrm{grad}_{\g}\mathcal{E}(\rho);  \hspace{1.2cm} \textrm{Heat equation}\\
&\partial_t\rho=-\textrm{Hess}_\g\mathcal{E}(\rho)^{-1}\cdot\textrm{grad}_{\g}\mathcal{E}(\rho);\hspace{3.8cm}\textrm{Newton's heat equation}
\end{split}.
\end{equation*}
\end{remark}

\subsection{Transport Hessian entropy dissipation}
In this subsection, we demonstrate the dissipation properties of energy functionals along transport Hessian gradient flows.  
\begin{corollary}[Transport Hessian de Brun identity]
Suppose $\rho(t,x)$ satisfies the transport Hessian gradient flow \eqref{tgd}, then 
\begin{equation*}
\begin{split}
\frac{d}{dt}\mathcal{F}(\rho(t,\cdot))=&-\mathcal{I}_{H}(\rho(t,\cdot)),
\end{split}
\end{equation*}
{where $\mathcal{I}_{H}\colon \mathcal{P}\rightarrow \mathbb{R}$ is defined by
\begin{equation*}
\begin{split}
\mathcal{I}_{H}(\rho)=&\int (\nabla_x\Phi^{\mathcal{F}}(x,\rho), \nabla_x\delta\mathcal{F}(\rho)(x))\rho(x)dx,
\end{split}
\end{equation*} }
with $\Phi^\mathcal{F}$ satisfying the Poisson equation \eqref{PO}.  
\end{corollary}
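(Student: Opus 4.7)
The plan is to compute $\frac{d}{dt}\mathcal{F}(\rho(t,\cdot))$ directly from the definition of the flow, following the standard pattern for dissipation identities along gradient flows. First, I would apply the chain rule in probability density space using the $L^2$ first variation operator:
\begin{equation*}
\frac{d}{dt}\mathcal{F}(\rho(t,\cdot)) = \int \delta\mathcal{F}(\rho)(x)\,\partial_t\rho(t,x)\,dx.
\end{equation*}

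Next I would substitute the transport Hessian gradient flow equation \eqref{tgd}, namely $\partial_t\rho = \nabla\cdot(\rho\nabla\Phi^{\mathcal{F}})$, into the right hand side, and then integrate by parts on the compact manifold $M = \mathbb{T}^d$ (no boundary terms), obtaining
\begin{equation*}
\int \delta\mathcal{F}(\rho)(x)\,\nabla\cdot(\rho(t,x)\nabla\Phi^{\mathcal{F}}(x,\rho))\,dx = -\int \bigl(\nabla_x\delta\mathcal{F}(\rho)(x),\,\nabla_x\Phi^{\mathcal{F}}(x,\rho)\bigr)\rho(t,x)\,dx.
\end{equation*}
By the definition of $\mathcal{I}_H$, this equals $-\mathcal{I}_H(\rho(t,\cdot))$, which is the claimed identity.

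There is essentially no obstacle here: the computation uses only the chain rule, the continuity-equation form of the flow, and integration by parts, all of which are standard on the torus. The only subtlety worth flagging explicitly is that $\Phi^{\mathcal{F}}$ is determined through the Poisson equation \eqref{PO} and enters the identity only through its gradient, so the integration by parts is unambiguous (the constant ambiguity in $\Phi^{\mathcal{F}}$ drops out because $\partial_t\rho$ has zero mean). One may optionally observe that using \eqref{PO} together with the expression \eqref{formula} for $\textrm{Hess}^*_{\g}\mathcal{E}$ gives the equivalent representation $\mathcal{I}_H(\rho) = \textrm{Hess}_{\g}^*\mathcal{E}(\rho)(\Phi^{\mathcal{F}},\Phi^{\mathcal{F}}) = \g^{\mathrm{H}}(\rho)(\textrm{grad}_{\g^{\mathrm{H}}}\mathcal{F}, \textrm{grad}_{\g^{\mathrm{H}}}\mathcal{F}) \geq 0$, making manifest that $\mathcal{F}$ is monotone nonincreasing along the Hessian gradient flow; but this observation, while illuminating, is not needed for the bare identity.
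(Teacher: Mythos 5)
Your proposal is correct and follows exactly the same route as the paper's proof: chain rule in density space, substitution of the flow equation \eqref{tgd}, and one integration by parts on the torus. The additional remark identifying $\mathcal{I}_H$ with $\g^\hh(\rho)(\textrm{grad}_{\g^\hh}\mathcal{F},\textrm{grad}_{\g^\hh}\mathcal{F})\geq 0$ is a nice supplement not present in the paper's argument, but the core computation is identical.
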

\begin{proof}
The proof follows from the definition of gradient flow. In other words, 
\begin{equation*}
\begin{split}
\frac{d}{dt}\mathcal{F}(\rho)=&\int \delta\mathcal{F}(\rho)(x) \nabla_x\cdot(\rho(t,x)\nabla_x\Phi^{\mathcal{F}}(x,\rho)) dx\\
=&-\int (\nabla_x\delta\mathcal{F}(\rho)(x), \nabla_x\Phi^\mathcal{F}(x,\rho))\rho(t,x)dx,
\end{split}
\end{equation*}
which finishes the proof.
\end{proof}
We provide several examples of the proposed entropy dissipation relations. 
\begin{example}
We notice that if $\mathcal{E}(\rho)=\frac{1}{2}\int x^2\rho(x)dx$ and $\mathcal{F}(\rho)=\int \rho(x)\log\rho(x)dx$, then 
\begin{equation*}
\nabla\cdot(\rho\nabla \Phi^\mathcal{F}(x,\rho))=\nabla\cdot(\rho\nabla\log\rho), 
\end{equation*}
 i.e. 
 \begin{equation*}
  \Phi^\mathcal{F}(x,\rho)=\log\rho.
\end{equation*}
 Hence 
 \begin{equation*}
 \mathcal{I}_{H}(\rho)=\int \|\nabla\log\rho(x)\|^2\rho(x)dx,
 \end{equation*} 
 which is known as the Fisher information functional. 
Hence, for general choices of $\mathcal{E}$, functional $\mathcal{I}_{H}$ extends the definition of classical Fisher information functional. For this reason, we call $\mathcal{I}_{H}$ the {\em transport Hessian Fisher information functional}. 
\end{example}
\begin{example}
If $\mathcal{E}(\rho)=\int \rho(x)\log\rho(x) dx$ and $\mathcal{F}(\rho)=\frac{1}{2}\int \|\nabla\log\rho(x)\|^2\rho(x) dx$, then
\begin{equation*}
\nabla^2\colon(\rho\nabla^2\Phi^\mathcal{F}(x,\rho))=-\nabla\cdot(\rho\nabla\delta\mathcal{F}(\rho))=\nabla^2\colon(\rho\nabla^2\log\rho), 
\end{equation*}
i.e.
\begin{equation*}
 \Phi^\mathcal{F}(x,\rho)=\log\rho.
\end{equation*}
Hence
\begin{equation*}
\mathcal{I}_H(\rho)=\int \mathrm{tr}(\nabla^2\log\rho(x), \nabla^2\log\rho(x))\rho(x) dx.
\end{equation*}
In this case, our transport Hessian Fisher information functional recovers the second order entropy dissipation property. In other words, the dissipation of classical Fisher information functional $\int \|\nabla\log\rho\|^2\rho dx$ along heat equation equals to second order information functional $\mathcal{I}_H=\int \mathrm{tr}(\nabla^2\log\rho, \nabla^2\log\rho)\rho dx$; see \cite{P}. 
\end{example}
We notice that this entropy dissipation relation will be useful in proving related functional inequalities. We leave the related studies of functional inequalities in the future {work}. 
\section{Connections with Stein variational gradient flows}\label{section6}
In this section, we connect transport Hessian gradient flows with Stein variational gradient flows. 

To do so, we first introduce the definition of kernel functions. 
 \begin{definition}
 Denote the kernel functions $\mathrm{H}$, $K_\mathrm{H}\colon M\times M\times \mathcal{P}\rightarrow \mathbb{R}$ as follows:
\begin{equation}\label{ker}
\mathrm{H}(x,y,\rho)=\delta^2\mathcal{E}(\rho)(x,y)\nabla_x\cdot(\rho\nabla_x)\nabla_y\cdot(\rho\nabla_y)-\nabla_x\cdot(\rho \nabla_x^2\delta\mathcal{E}(\rho)(x)\nabla_x)\delta(x-y).
\end{equation}
Denote 
\begin{equation*}
K_\mathrm{H}(x,y,\rho):=\mathrm{H}(x,y,\rho)^{-1}, 
\end{equation*}
in the sense that  
\begin{equation*}
\int\int K_\mathrm{H}(x,y,\rho) \mathrm{H}(y,z,\rho) f(z) dxdy= f(z), 
\end{equation*}
for any testing function $f\in C^{\infty}(M)$.
\end{definition} 
Using the kernel function $K_\hh$, we formulate the transport Hessian metric as follows. 
\begin{theorem}\label{prop9}
The transport Hessian metric $ \g^\hh$ satisfies 
\begin{equation}\label{HM}
 \g^\hh(\rho)(\sigma_1,\sigma_2)=\int \int \nabla_x\nabla_yK_\mathrm{H}(x,y,\rho)\big(\nabla_x\Psi_1(x),\nabla_y\Psi_2(y)\big)\rho(x)\rho(y)dxdy,
\end{equation}
where {$\Psi_i\in C^{\infty}(M)$ and $\sigma_i\in T_\rho\mathcal{P}$, $i=1,2$, satisfy}
\begin{equation}\label{sigma}
\sigma_i(x)=-\nabla_x\cdot\Big(\rho(x) \int \nabla_x\nabla_yK_\mathrm{H}(x,y,\rho)\nabla_y\Psi_i(y)\rho(y)dy\Big).
\end{equation}
\end{theorem}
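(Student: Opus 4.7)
The plan is to show that the kernel $\mathrm{H}$ defined in \eqref{ker} is precisely the integral kernel representation of the cometric $\textrm{Hess}_\g^*\mathcal{E}$ in the variable $\Phi$, and then use the substitution between $\Phi$ and $\Psi$ furnished by \eqref{sigma} together with the inverse relation $K_\mathrm{H}=\mathrm{H}^{-1}$ to collapse everything into \eqref{HM}. The first step is therefore to recast \eqref{formula} in kernel form. Starting from the first term in \eqref{formula}, two integrations by parts in $x$ and $y$ (using the self-adjointness of $-\nabla\cdot(\rho\nabla)$) move the weighted Laplacians off $\sigma_i=-\nabla\cdot(\rho\nabla\Phi_i)$ and onto $\delta^2\mathcal{E}$, producing $\int\!\!\int\Phi_1(x)\Phi_2(y)\nabla_x\cdot(\rho\nabla_x)\nabla_y\cdot(\rho\nabla_y)\delta^2\mathcal{E}(\rho)(x,y)\,dxdy$. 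Similarly, one integration by parts in $x$ recasts the second term of \eqref{formula} as $-\int\!\!\int \Phi_1(x)\Phi_2(y)\nabla_x\cdot(\rho\nabla_x^2\delta\mathcal{E}\nabla_x)\delta(x-y)\,dxdy$. Summing gives the clean identity $\g^\hh(\rho)(\sigma_1,\sigma_2)=\int\!\!\int \Phi_1(x)\mathrm{H}(x,y,\rho)\Phi_2(y)\,dxdy$ with $\mathrm{H}$ as in \eqref{ker}.

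Next I would reconcile the $\Psi$-parametrization in \eqref{sigma} with the standard gradient-drift parametrization $\sigma=-\nabla\cdot(\rho\nabla\Phi)$. Introducing the auxiliary quantity $\sigma^\Psi_i(y):=-\nabla_y\cdot(\rho(y)\nabla\Psi_i(y))$ and integrating by parts in $y$ inside \eqref{sigma}, the inner $y$-integral becomes
\begin{equation*}
\int \nabla_x\nabla_y K_\mathrm{H}(x,y,\rho)\nabla_y\Psi_i(y)\rho(y)\,dy=\nabla_x\!\int K_\mathrm{H}(x,y,\rho)\,\sigma^\Psi_i(y)\,dy.
\end{equation*}
Hence $\sigma_i=-\nabla\cdot(\rho\nabla\Phi_i)$ with $\Phi_i(x):=\int K_\mathrm{H}(x,y,\rho)\sigma^\Psi_i(y)\,dy$, i.e., $\Phi_i$ is the $K_\mathrm{H}$-image of $\sigma^\Psi_i$.

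Combining the two preceding steps, I substitute this expression for $\Phi_i$ into the kernel representation of $\g^\hh$ to get a quadruple integral involving $K_\mathrm{H}\,\mathrm{H}\,K_\mathrm{H}$. Using the defining inverse property $\int K_\mathrm{H}(x,y,\rho)\mathrm{H}(y,z,\rho)\,dy=\delta(x-z)$ together with the symmetry of $\mathrm{H}$ (which is manifest from \eqref{ker} once one notes that the first term is symmetric in $(x,y)$ and the second term is supported on the diagonal and self-adjoint), the inner double convolution collapses and yields $\g^\hh(\rho)(\sigma_1,\sigma_2)=\int\!\!\int \sigma^\Psi_1(x)K_\mathrm{H}(x,y,\rho)\sigma^\Psi_2(y)\,dxdy$. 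Finally, one reverses the integrations by parts, pushing $-\nabla\cdot(\rho\nabla)$ back from $\Psi_i$ onto $K_\mathrm{H}$: IBP in $x$ against $\sigma^\Psi_1$ turns $K_\mathrm{H}(x,y)\sigma^\Psi_1(x)$ into $\rho(x)\nabla_x\Psi_1(x)\cdot\nabla_x K_\mathrm{H}(x,y)$, and an analogous IBP in $y$ produces the gradient $\nabla_y\Psi_2(y)$ against $\nabla_y K_\mathrm{H}$, giving exactly the bilinear form \eqref{HM}.

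The main subtlety is interpretational rather than technical: the kernel $\mathrm{H}$ in \eqref{ker} mixes a genuine kernel $\delta^2\mathcal{E}$ with differential operators acting on $\delta(x-y)$, so one must fix a convention (test the kernel against smooth potentials via integration by parts) before inverting $\mathrm{H}$ and manipulating $K_\mathrm{H}$. Once this is settled, the whole argument is a short sequence of IBPs together with the inverse-kernel identity, and the symmetry of $\mathrm{H}$ — inherited from $\textrm{Hess}_\g\mathcal{E}$ being a true second variation — ensures that $K_\mathrm{H}$ is also symmetric, which is what makes the final collapse to \eqref{HM} clean.
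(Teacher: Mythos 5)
Your argument is correct and follows essentially the same route as the paper: express $\g^\hh$ as the bilinear form $\int\!\!\int \Phi_1\,\mathrm{H}\,\Phi_2$, identify $\Phi_i$ with the $K_\mathrm{H}$-image of $-\nabla\cdot(\rho\nabla\Psi_i)$ via \eqref{sigma}, collapse $K_\mathrm{H}\,\mathrm{H}\,K_\mathrm{H}=K_\mathrm{H}$ (the paper's operator identity $G_\hh^{-1}G_\hh G_\hh^{-1}=G_\hh^{-1}$), and finish with two integrations by parts. The only difference is cosmetic: you work with explicit kernels and spell out the derivation of the kernel representation of $\mathrm{H}$ from \eqref{formula}, which the paper simply asserts.
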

\begin{remark}
We remark that the transport Hessian metric is a particular Stein variational metric \cite{SGD, Liu2017} with a mean field kernel function. To see this connection, let us first recall the definition of a Stein metric. Given a kernel matrix function $K_S(x,y)=K_S(y,x)\in \mathbb{R}^{d\times d}$, for any $x,y\in M$ with suitable conditions, the Stein metric $\g^S\colon \mathcal{P}\times T_\rho\mathcal{P}\times T_\rho\mathcal{P}\rightarrow\mathbb{R}$ is defined as follows:
\begin{equation}\label{stein}
\g^S(\rho)(\sigma_1,\sigma_2)=\int\int K_S(x,y)(\nabla_x\Psi(x), \nabla_y\Psi(y))\rho(x)\rho(y)dxdy,
\end{equation}
with 
\begin{equation*}
\sigma_i(x)=-\nabla_x\cdot(\rho(x)\int \rho(y) K_S(x,y)\nabla_y\Psi_i(y) dy), \quad \textrm{for $i=1,2$.}
\end{equation*}
Comparing the definition of Stein metric \eqref{stein} with transport Hessian metric \eqref{HM}, we notice that 
\begin{equation*}
K_S(x,y)=\nabla_x\nabla_yK_\mathrm{H}(x,y,\rho).
\end{equation*}
Here the matrix kernel function $K_S$ in transport Hessian metric depends on the density function $\rho$. 
\end{remark}
\begin{proof}
We notice that
\begin{equation*}
\begin{split}
 \g^\hh(\rho)(\sigma, \sigma)=&\textrm{Hess}^*_\g\mathcal{E}(\rho)(\Phi,\Phi)\\
=&\int\int \mathrm{H}(x,y,\rho)\Phi(x)\Phi(y)dxdy,
\end{split}
\end{equation*}
where $$\sigma=-\nabla\cdot(\rho\nabla\Phi).$$ 
Here $\mathrm{H}\colon M\times M\times\mathcal{P}\rightarrow \mathbb{R}$ is the kernel function satisfying \eqref{ker}. 
Denote $\Phi=(-\Delta_\rho)^{-1}\sigma$, then  
\begin{equation*}
\begin{split}
 \g^\hh(\rho)(\sigma, \sigma)=&\int\int \mathrm{H}(x,y,\rho)(-\Delta_\rho)^{-1}\sigma(x) (-\Delta_\rho)^{-1}\sigma(y)dxdy\\
=& \Big(\sigma, (-\Delta_\rho)^{-1}\cdot \mathrm{H}\cdot (-\Delta_\rho)^{-1}\sigma\Big)_{L^2}.
\end{split}
\end{equation*}
We next represent the metric $\g^{\H}$ in the cotangent space of $(\mathcal{P},  \g^\hh)$. Denote 
\begin{equation*}
\begin{split}
\sigma =& \Big((-\Delta_\rho)^{-1}\cdot \mathrm{H}\cdot (-\Delta_\rho)^{-1}\Big)^{-1}\Psi\\
=&\Delta_\rho\cdot \mathrm{H}^{-1} \cdot \Delta_\rho \Psi \\
=&\Delta_\rho\cdot K_\mathrm{H} \cdot \Delta_\rho\Psi.
\end{split}.
\end{equation*}
By integration by parts formula, we derive equation \eqref{sigma}. Denote the operator $G_\mathrm{H}\colon T_\rho\mathcal{P}\rightarrow T_\rho\mathcal{P}$ by 
\begin{equation*}
G_{\mathrm{H}}(\rho)=(\Delta_\rho)^{-1}\cdot \mathrm{H} \cdot (\Delta_\rho)^{-1},
\end{equation*}
then
\begin{equation}
G_\hh(\rho)^{-1}=(\Delta_\rho)\cdot K_\mathrm{H}\cdot (\Delta_\rho).
\end{equation}
Hence 
\begin{equation*}
\begin{split}
\ g^\hh(\rho)(\sigma, \sigma)=&\Big( \Psi, G_\hh(\rho)^{-1} G_\hh(\rho) G_\hh(\rho)^{-1} \Psi\Big)_{L^2}\\
=&\Big( \Psi, G_\hh(\rho)^{-1} \Psi\Big)_{L^2}\\
=&\Big( \Psi, \Delta_\rho\cdot K_\mathrm{H}\cdot \Delta_\rho \Psi\Big)_{L^2}\\
=& \int \int \Psi(x)\nabla_x\cdot\Big(\rho(x)\nabla_x K_\mathrm{H}(x,y,\rho) \nabla_y\cdot(\rho(y)\nabla_y\Psi(y))\Big)dx dy\\
=& -\int \int (\nabla_x\Psi(x), \nabla_x K_\mathrm{H}(x,y,\rho)) \rho(x)\nabla_y\cdot(\rho(y)\nabla_y\Psi(y))dx dy\\
=& \int \int \nabla_x\nabla_yK_\mathrm{H}(x,y,\rho)(\nabla_x\Psi(x),\nabla_y\Psi(y))\rho(x)\rho(y)dx dy,
\end{split}
\end{equation*}
where the last two equalities apply the integration by parts w.r.t. variables $x$, $y$, respectively. 
\end{proof} 
We are now ready to formulate transport Newton's flows in term of Stein variational gradient flows.
\begin{proposition}[Stein--transport Newton's flows]
The transport Newton's flow of energy $\mathcal{E}(\rho)$ satisfies 
\begin{equation*}
\partial_t\rho(t,x)=\nabla_x\cdot\Big(\rho(t,x)\int \rho(t,y)(\nabla_x\nabla_yK_\mathrm{H}(x,y,\rho), \nabla_y\delta\mathcal{E}(\rho)(y))dy\Big).
\end{equation*}
In particular, consider $\mathcal{E}(\rho)$ as the KL divergence functional, i.e. $\mathcal{E}(\rho)=\int\rho\log\frac{\rho}{e^{-f}}dx$, then the transport Newton's flow satisfies
\begin{equation}\label{Sflow}
\begin{split}
\partial_t\rho=&\quad \nabla_x\cdot\Big(\rho(t,x)\int \rho(t,y)(\nabla_x\nabla_yK_\mathrm{H}(x,y,\rho), \nabla_y f(y)dy\Big)\\
&-\nabla_x\cdot\Big(\rho(t,x) \int \rho(t,y)\nabla_x\Delta_y K_\mathrm{H}(x,y,\rho)dy\Big).
\end{split}
\end{equation}

\end{proposition}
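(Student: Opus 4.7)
The plan is to prove the two statements sequentially: first, extract the ``Stein form'' of the transport Newton's flow from Theorem \ref{prop9}; second, specialize to KL divergence by splitting $\nabla_y\delta\mathcal{E}$ and integrating by parts in $y$.

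First I would observe that, by the transport Newton's flow corollary applied with $\mathcal{F}=\mathcal{E}$, we have $\partial_t\rho=-\textrm{grad}_{\g^{\hh}}\mathcal{E}(\rho)$. By Theorem \ref{prop9}, the Hessian metric admits the representation
\begin{equation*}
\g^{\hh}(\rho)(\sigma_1,\sigma_2)=\int\int \nabla_x\nabla_y K_{\hh}(x,y,\rho)(\nabla_x\Psi_1,\nabla_y\Psi_2)\rho(x)\rho(y)dxdy,
\end{equation*}
with $\sigma_i$ given in terms of $\Psi_i$ by \eqref{sigma}. To extract the gradient operator in this Stein form, I would test against an arbitrary tangent vector $\sigma=-\nabla_x\cdot(\rho(x)\int\nabla_x\nabla_y K_{\hh}\nabla_y\Psi\rho(y)dy)$ and integrate by parts in $x$ to get
\begin{equation*}
\int \delta\mathcal{E}(\rho)(x)\sigma(x)dx=\int\int \nabla_x\nabla_y K_{\hh}(x,y,\rho)(\nabla_x\delta\mathcal{E}(\rho)(x),\nabla_y\Psi(y))\rho(x)\rho(y)dxdy.
\end{equation*}
Matching this with $\g^{\hh}(\sigma,\textrm{grad}_{\g^{\hh}}\mathcal{E})$ and using the symmetry $K_{\hh}(x,y,\rho)=K_{\hh}(y,x,\rho)$ inherited from $\mathrm{H}$, the ``potential'' associated with $\textrm{grad}_{\g^{\hh}}\mathcal{E}$ is precisely $\delta\mathcal{E}(\rho)$. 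After negation this yields the first formula of the proposition.

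Next I would specialize to $\mathcal{E}(\rho)=\int\rho\log\frac{\rho}{e^{-f}}dx$, whose first variation is $\delta\mathcal{E}(\rho)(y)=\log\rho(y)+f(y)+1$, and hence $\nabla_y\delta\mathcal{E}(\rho)(y)=\nabla_y\log\rho(y)+\nabla_y f(y)$. I would split the inner integral along this decomposition. The $\nabla_y f$ piece reproduces the first term of \eqref{Sflow}. For the $\nabla_y\log\rho$ piece, I would use $\rho(y)\nabla_y\log\rho(y)=\nabla_y\rho(y)$ and integrate by parts in $y$: componentwise,
\begin{equation*}
\sum_j\int (\partial_{x_i}\partial_{y_j}K_{\hh})(\partial_{y_j}\rho)dy=-\int \partial_{x_i}\Delta_y K_{\hh}(x,y,\rho)\,\rho(y)dy,
\end{equation*}
which after the outer $\nabla_x\cdot(\rho(x)\,\cdot\,)$ yields the second term of \eqref{Sflow} with the correct sign.

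The main obstacle is the Stein-form identification of $\textrm{grad}_{\g^{\hh}}\mathcal{E}$, which hinges on the inversion $K_{\hh}=\mathrm{H}^{-1}$ and a careful pair of integrations by parts; the key simplification is that the $\rho$-dependence of $K_{\hh}$ does not intervene in the variational matching, since $\rho$ is frozen at each instant in the metric pairing defining the gradient. All other computations, including the final integration by parts in $y$, are routine and parallel standard Stein variational gradient calculations.
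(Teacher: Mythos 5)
Your proof is correct, and both parts reach the stated formulas with the right signs. The second part (the KL specialization via $\nabla_y\delta\mathcal{E}=\nabla_y\log\rho+\nabla_y f$, the identity $\rho\nabla\log\rho=\nabla\rho$, and one integration by parts in $y$) is exactly the paper's computation. The first part takes a mildly different route: the paper starts from the gradient-flow system of Theorem \ref{thm4}, i.e.\ the Poisson equation $-\int\mathrm{H}(x,y,\rho)\Phi^{\xi}(y,\rho)dy=\nabla\cdot(\rho\nabla\delta\mathcal{E}(\rho))$, and solves it explicitly by applying $K_{\mathrm{H}}=\mathrm{H}^{-1}$ and integrating by parts to obtain the potential $\Phi(x,\rho)=-\int(\nabla_yK_{\mathrm{H}},\nabla_y\delta\mathcal{E})\rho(y)dy$, which is then inserted into the continuity equation; you instead work entirely in the Stein parametrization of Theorem \ref{prop9} and identify the cotangent representative of $\textrm{grad}_{\g^{\hh}}\mathcal{E}$ as $\delta\mathcal{E}(\rho)$ by a variational matching. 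The two arguments are two faces of the same inversion $K_{\mathrm{H}}=\mathrm{H}^{-1}$; yours makes the Stein structure manifest from the outset (and makes clear why the Stein potential of the Newton direction is just $\delta\mathcal{E}$, which is the conceptual point of the proposition), while the paper's is more explicit about which Poisson equation is being solved. Two small caveats in your version, neither fatal: the matching identifies $\Psi^{\mathcal{E}}$ with $\delta\mathcal{E}$ only modulo the null space of the Stein metric operator (this does not change the resulting flow), and it does require the symmetry $K_{\mathrm{H}}(x,y,\rho)=K_{\mathrm{H}}(y,x,\rho)$, which you correctly note follows from the symmetry of $\mathrm{H}$ as the kernel of the symmetric bilinear form $\textrm{Hess}^*_{\g}\mathcal{E}$.
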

\begin{proof}
From Theorem \ref{thm4},  the transport Hessian gradient flow satisfies the following equation
\begin{equation}\label{wnew1}
\left\{
\begin{split}
&\partial_t\rho-\nabla\cdot(\rho\nabla\Phi^{\xi})=0\\
&-\int \mathrm{H}(x,y,\rho)\Phi^{\xi}(y,\rho)dy=\nabla\cdot(\rho\nabla\delta\mathcal{E}(\rho))(x).
\end{split}\right.
\end{equation}
Denote $\Phi=-\Phi^\xi$. We notice that the second equation of \eqref{wnew1} satisfies
\begin{equation*}
\begin{split}
\Phi(x,\rho)=&\int K_\mathrm{H}(x,y,\rho) \nabla_y\cdot(\rho(y)\nabla_y\delta\mathcal{E}(\rho))dy\\
=&-\int (\nabla_yK_\mathrm{H}(x,y,\rho), \nabla_y\delta\mathcal{E}(\rho))\rho(y)dy.
\end{split}
\end{equation*}
Hence the first equation of \eqref{wnew1} satisfies 
\begin{equation*}
\begin{split}
\partial_t\rho=&-\nabla\cdot(\rho\nabla\Phi)\\
=&\nabla_x\cdot\Big(\rho\nabla_x \int (\nabla_yK_\mathrm{H}(x,y,\rho), \nabla_y\delta\mathcal{E}(\rho)(y))\rho(t,y)dy \Big)\\
=&\nabla_x\cdot\Big(\rho\int \rho(t,y)(\nabla_x\nabla_yK_\mathrm{H}(x,y,\rho), \nabla_y\delta\mathcal{E}(\rho)(y))dy\Big),
\end{split}
\end{equation*}
which finishes the proof of {the} first part. {Denote by $\mathcal{E}(\rho)$ the KL divergence}. Then $\delta\mathcal{E}(\rho)=\log\frac{\rho}{e^{-f}}+1$. Hence 
\begin{equation*}
\begin{split}
\partial_t\rho=&\quad \nabla_x\cdot\Big(\rho(t,x)\int \rho(t,y)(\nabla_x\nabla_yK_\mathrm{H}(x,y,\rho), \nabla_y\log\frac{\rho(t,y)}{e^{-f(y)}}dy\Big)\\
=&\quad\nabla_x\cdot\Big(\rho(t,x)\int \rho(t,y)(\nabla_x\nabla_yK_\mathrm{H}(x,y,\rho), \nabla_y\log\rho(t,y))dy\Big)\\
&+\nabla_x\cdot\Big(\rho(t,x)\int \rho(t,y)(\nabla_x\nabla_yK_\mathrm{H}(x,y,\rho), \nabla_yf(y))dy\Big)\\
=&\quad\nabla_x\cdot\Big(\rho(t,x)\int (\nabla_x\nabla_yK_\mathrm{H}(x,y,\rho), \nabla_y\rho(t,y))dy\Big)\\
&+\nabla_x\cdot\Big(\rho(t,x)\int \rho(t,y)(\nabla_x\nabla_yK_\mathrm{H}(x,y,\rho), \nabla_yf(y))dy\Big)\\
=&-\nabla_x\cdot\Big(\rho(t,x)\int (\nabla_x\Delta_yK_\mathrm{H}(x,y,\rho)\rho(t,y)dy\Big)\\
&+\nabla_x\cdot\Big(\rho(t,x)\int \rho(t,y)(\nabla_x\nabla_yK_\mathrm{H}(x,y,\rho), \nabla_yf(y)dy\Big),
\end{split}
\end{equation*}
where the third equality we use the fact that $\rho\nabla\log\rho=\rho$, and the last equality holds by integration by parts for {the} variable $y$. 
\end{proof}

\begin{proposition}[Particle formulation of Transport Newton's flow]
Denote $X_t\sim \rho$, $Y_t\sim \rho$ are two independent identical processes, then the particle formulation of transport Newton's flow of energy $\mathcal{E}(\rho)$ satisfies 
\begin{equation*}
\begin{split}
\frac{d}{dt}X_t=&-\mathbb{E}_{Y_t\sim\rho}\big[\nabla_x\nabla_yK_\hh(X_t, Y_t,\rho), \nabla_y\delta\mathcal{E}(\rho)(Y_t)\big].
\end{split}
\end{equation*}
In particular, if $\mathcal{E}(\rho)$ is the KL divergence functional, i.e. $\mathcal{E}(\rho)=\int \rho(x)\log\frac{\rho(x)}{e^{-f(x)}}dx$, where $e^{-f}$ is a given target distribution with $f\in C^{\infty}(M)$, then the particle formulation of transport Newton's flow satisfies 
\begin{equation*}
\frac{d}{dt}X_t=-\mathbb{E}_{Y_t\sim\rho}\big[\nabla_x\nabla_yK_\hh(X_t, Y_t,\rho), \nabla_yf(Y_t)\big]+\mathbb{E}_{Y_t\sim\rho}\big[\nabla_x\Delta_yK_\hh(X_t, Y_t,\rho)\big].
\end{equation*}
\end{proposition}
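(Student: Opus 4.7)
The plan is to promote the PDE statement of the preceding proposition to a particle-level ODE by reading off the velocity field from the continuity-equation form and identifying the $y$-integral against $\rho(t,y)\,dy$ with an expectation over an independent copy $Y_t$.

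First I would rewrite the transport Newton's flow of $\mathcal{E}$ in the form $\partial_t\rho+\nabla_x\cdot(\rho\,v^{\mathcal{E}})=0$ with velocity field
\begin{equation*}
v^{\mathcal{E}}(x,\rho)=-\int \rho(y)\,\bigl(\nabla_x\nabla_yK_\hh(x,y,\rho),\,\nabla_y\delta\mathcal{E}(\rho)(y)\bigr)\,dy.
\end{equation*}
The characteristic ODE of this continuity equation is $\dot X_t=v^{\mathcal{E}}(X_t,\rho)$ with $X_t\sim\rho(t,\cdot)$, and since $Y_t$ is an independent copy with the same law, the integral is exactly the stated expectation, which establishes the general formula.

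For the KL specialization, I would substitute $\delta\mathcal{E}(\rho)(y)=\log\rho(y)+f(y)+1$, so that $\nabla_y\delta\mathcal{E}(\rho)=\nabla_y\log\rho+\nabla_yf$, and split $v^{\mathcal{E}}$ accordingly. The $\nabla_yf$ piece immediately produces the term $-\mathbb{E}_{Y_t\sim\rho}[(\nabla_x\nabla_yK_\hh(X_t,Y_t,\rho),\nabla_yf(Y_t))]$. For the $\nabla_y\log\rho$ piece I would use the identity $\rho\,\nabla_y\log\rho=\nabla_y\rho$ to rewrite the inner $y$-integral as $-\int\bigl(\nabla_x\nabla_yK_\hh(X_t,y,\rho),\nabla_y\rho(t,y)\bigr)dy$, and then integrate by parts in $y$ on the closed manifold $M=\mathbb{T}^d$ (no boundary contributions). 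The resulting divergence $\nabla_y\cdot(\nabla_x\nabla_yK_\hh)$ collapses to $\nabla_x\Delta_yK_\hh$, and the sign flip from integration by parts combines with the leading minus sign to produce exactly $+\mathbb{E}_{Y_t\sim\rho}[\nabla_x\Delta_yK_\hh(X_t,Y_t,\rho)]$. This is just a particle-level restatement of the calculation already carried out for the density in the preceding proposition.

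This reduction is essentially bookkeeping, so I do not foresee a genuine obstacle. The only point that deserves attention is that the integration-by-parts step is clean despite the functional dependence of $K_\hh$ on $\rho$: the operator $\nabla_y$ acts only on the spatial argument $y$ of $K_\hh$, not on its $\rho$-dependence, so the manipulation is legitimate. Smoothness of $K_\hh$, which follows from inverting the elliptic operator $\mathrm{H}$ of \eqref{ker} on smooth positive densities, justifies all the interchanges of integration and differentiation used above.
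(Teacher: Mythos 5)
Your proposal is correct and matches the paper's approach: the paper's proof is a one-line appeal to the Kolmogorov forward operator, i.e.\ exactly your step of reading the drift off the continuity-equation form of the preceding proposition, and the KL specialization (splitting $\nabla_y\delta\mathcal{E}=\nabla_y\log\rho+\nabla_y f$ and integrating by parts in $y$) is the same computation the paper already carries out at the density level there.
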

\begin{proof}
Here the particle flow follows directly from the definition of Kolmogorov forward operator.  
\end{proof}
\begin{remark}[Transport Newton kernels]\label{rmk4}
For computing transport Newton's flows, the major issue is to approximate the inverse of kernel function defined in \eqref{ker}. For example, if $\mathcal{E}(\rho)=\mathrm{D}_{\textrm{KL}}(\rho\|e^{-f})=\int \rho \log\frac{\rho}{e^{-f}} dx$, 
then 
\begin{equation*}
K_\mathrm{H}(x,y,\rho)=\Big(\nabla^2\colon(\rho \nabla^2)-\nabla\cdot(\rho\nabla^2 f\nabla)\Big)^{-1}(x,y).
\end{equation*}
For this choice of kernel function, the asymptotic convergence rate of gradient flow \eqref{Sflow} is of Newton type. In computations, we need to approximate the inverse of the second order Laplacian operator given by a kernel function $K_\hh$. This selection provides us hints for designing mean field kernel functions for accelerating Stein variational gradient; see related studies in \cite{SN, SGD}.
\end{remark}
\begin{remark}[Connecting Stein metric and transport metric]\label{rmk}
It is also worth mentioning that if $\mathcal{E}(\rho)=\frac{1}{2}\int x^2\rho(x)dx$, then the transport Hessian metric is the Wasserstein-$2$ metric. From the equivalent relation between Stein metric and transport Hessian metric, we notice that Wasserstein-$2$ metric is a particular mean field version of Stein metric. In details, if we choose the kernel function by 
\begin{equation*}
K_\mathrm{H}(x,y,\rho)=\Big(-\nabla\cdot(\rho\nabla)\Big)^{-1}(x,y),
\end{equation*}
then $\ g^\hh$ forms the Wasserstein-$2$ metric $\g$. Hence, the corresponding Stein variational derivative forms exactly the Wasserstein-$2$ gradient. 
\end{remark}
 Approximating kernel functions $K_\hh$ in above remarks are interesting future directions. 

\section{Transport Hessian Hamiltonian flows}\label{section5}
In this section, we present several dynamics associated with transport Hessian Hamiltonian flows. 

We first consider the variational formulation, a.k.a. action functional, for the proposed Hamiltonian flow.
Consider 
\begin{equation*}
\inf_{\rho}~~\Big\{\int_0^1 \frac{1}{2} \g^\hh(\partial_t\rho, \partial_t\rho)-\mathcal{F}(\rho) dt\Big\},
\end{equation*}
where the infimum is taken among all possible density paths with suitable boundary conditions on initial and terminal densities $\rho^0$, $\rho^1$, respectively. Denote $\partial_t\rho=\Delta_\rho\Phi$, then the above action functional can be formulated by
\begin{equation}\label{p1}
\begin{split}
&\inf_{\rho,\Phi}~~\Big\{\int_0^1\frac{1}{2}[\textrm{Hess}^*_{\g}\mathcal{E}(\rho)(\Phi, \Phi)]-\mathcal{F}(\rho) dt\Big\}\\
=&\inf_{\rho,\Phi}~~\Big\{\int_0^1\frac{1}{2}\Big[\int\int \nabla^2_{xy}\delta^2\mathcal{E}(\rho)(x,y)\nabla_x\Phi(t,x)\nabla\Phi(t,y)\rho(t,x)\rho(t,y)dxdy\\
&\hspace{2cm}+\int \nabla_x^2\delta\mathcal{E}(\rho)(x)(\nabla\Phi(t,x), \nabla\Phi(t,x))\rho(t,x)dx\Big]-\mathcal{F}(\rho) dt\Big\}
\end{split}
\end{equation}
where the infimum is taken among all possible density and potential functions $(\rho, \Phi)\colon [0,1]\times M]\rightarrow\mathbb{R}^2$, such that the continuity equation holds with the gradient vector drift function:  
\begin{equation*}
\partial_t\rho(t,x)+\nabla\cdot(\rho(t,x)\nabla\Phi(t,x))=0.
\end{equation*}
Here, suitable boundary conditions are given on both initial and terminal densities. 

We are ready to derive Hamiltonian flows in the Hessian density manifold. 
\begin{theorem}[Transport Hessian Hamiltonian flows]
The Hamiltonian flow in $(\mathcal{P},  \g^\hh(\rho))$ satisfies 
\begin{equation}\label{H}
\left\{\begin{split}
&\partial_t\rho+\nabla\cdot(\rho\nabla\Phi)=0\\
&\partial_t\Psi+(\nabla\Phi, \nabla\Psi)+\delta\mathcal{F}(\rho)=\frac{1}{2}\int\nabla_{yz}^2\delta^3\mathcal{E}(\rho)(x,y,z)(\nabla_y\Phi(t,y), \nabla_z\Phi(t,z))\rho(t,y)\rho(t,z)dydz\\
&\hspace{4.6cm}+\int \nabla_{xy}^2\delta^2\mathcal{E}(\rho)(x,y)(\nabla_x\Phi(t,x), \nabla_y\Phi(t,y))\rho(t,y)dy\\
&\hspace{4.6cm}+\frac{1}{2}\int \nabla^2_y\delta^2\mathcal{E}(\rho)(x,y)(\nabla_y\Phi(t,y),\nabla_y\Phi(t,y))\rho(t,y)dy\\
&\hspace{4.6cm}+\frac{1}{2}\nabla^2_x\delta\mathcal{E}(\rho)(x)(\nabla_x\Phi(t,x),\nabla_x\Phi(t,x)), 
\end{split}\right.
\end{equation}
where $\Phi$, $\Psi$ satisfy the Poisson equation 
\begin{equation}\label{PO2}
\nabla_x\cdot(\rho(t,x)\nabla_x\Psi(t,x))=\nabla_x\cdot\Big(\rho(t,x) [\nabla_x\int \nabla_y\delta^2\mathcal{E}(\rho)(x,y)\nabla_y\Phi(t,y)\rho(t,y)dy+\nabla^2_x\delta\mathcal{E}(\rho)\nabla\Phi]\Big).
\end{equation}
Here, $\delta^3$ denotes the $L^2$ third variational derivative. 
\end{theorem}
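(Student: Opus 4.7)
The plan is to obtain \eqref{H}--\eqref{PO2} as the Euler--Lagrange equations of the constrained action functional displayed just above the theorem statement. I would introduce a Lagrange multiplier $\Psi(t,x)$ for the continuity constraint and form
\[
\mathcal{L}[\rho,\Phi,\Psi]=\int_0^1\Big\{\frac{1}{2}\textrm{Hess}^*_\g\mathcal{E}(\rho)(\Phi,\Phi)-\mathcal{F}(\rho)+\int\Psi\big(\partial_t\rho+\nabla\cdot(\rho\nabla\Phi)\big)dx\Big\}dt.
\]
Stationarity in $\Psi$ immediately reproduces the continuity equation, the first line of \eqref{H}, so what remains are the two nontrivial variations in $\Phi$ and in $\rho$.

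For the $\Phi$-variation I would replace $\Phi$ by $\Phi+\epsilon\eta$ and differentiate at $\epsilon=0$. Symmetry of the Hessian operator gives $\textrm{Hess}^*_\g\mathcal{E}(\rho)(\Phi,\eta)$, and the explicit formula \eqref{formula}, together with one integration by parts in $y$ for the nonlocal double integral and one in $x$ for the local integral, rewrites this contribution as $\int\eta\,\nabla_x\cdot(\rho\,[\cdot])\,dx$, where $[\cdot]$ is precisely the bracketed vector field on the right of \eqref{PO2}. The constraint contribution $\int\Psi\nabla\cdot(\rho\nabla\eta)dx$ becomes $\int\eta\,\nabla\cdot(\rho\nabla\Psi)dx$ after two integrations by parts. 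Setting the sum to zero for arbitrary $\eta$ yields \eqref{PO2}.

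The main work lies in the $\rho$-variation, which produces the evolution equation for $\Psi$. Its contributions are: $-\delta\mathcal{F}(\rho)$ from $-\mathcal{F}(\rho)$; $-\partial_t\Psi$ from $\int\Psi\partial_t\rho\,dx$ after integration by parts in $t$; $-(\nabla\Psi,\nabla\Phi)$ from $\int\Psi\nabla\cdot(\rho\nabla\Phi)dx$; and four further contributions from $\frac{1}{2}\textrm{Hess}^*_\g\mathcal{E}(\rho)(\Phi,\Phi)$, which depends on $\rho$ through $\delta^2\mathcal{E}(\rho)(x,y)$ (whose $\rho$-derivative is $\delta^3\mathcal{E}(\rho)(x,y,z)$), through $\delta\mathcal{E}(\rho)(x)$ (whose $\rho$-derivative is $\delta^2\mathcal{E}(\rho)(x,y)$), through the two $\rho$-factors in the nonlocal double integral (which by the $x\leftrightarrow y$ symmetry collapse into a single mixed term), and through the single $\rho$-factor in the local term. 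Collecting these with the correct signs reproduces exactly the four right-hand-side terms of the second equation in \eqref{H}.

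The principal obstacle will be the bookkeeping in this $\rho$-variation: after differentiation the spatial variable at which we evaluate the Euler--Lagrange equation flips from being an integration dummy to being the free outer variable $x$, so one has to invoke the full symmetry of $\delta^2\mathcal{E}(\rho)(x,y)$ and $\delta^3\mathcal{E}(\rho)(x,y,z)$ to pull each of the operators $\nabla_x$, $\nabla_x^2$, $\nabla_y^2$, $\nabla_y\nabla_z$ onto the correct slot so that the resulting expression matches the form stated in \eqref{H}. All remaining steps are routine integrations by parts.
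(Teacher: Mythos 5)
Your proposal is correct and follows essentially the same route as the paper: the paper likewise introduces the Lagrange multiplier $\Psi$ for the continuity constraint, forms the Lagrangian $\mathcal{L}(\rho,\Phi,\Psi)$, and reads off the three stationarity conditions $\delta_\Psi\mathcal{L}=0$, $\delta_\Phi\mathcal{L}=0$, $\delta_\rho\mathcal{L}=0$ as the continuity equation, the Poisson equation \eqref{PO2}, and the $\Psi$-evolution equation. Your explicit accounting of the four contributions of $\tfrac{1}{2}\delta_\rho\,\textrm{Hess}^*_{\mathcal{g}}\mathcal{E}(\rho)(\Phi,\Phi)$ (from $\delta^2\mathcal{E}$, from $\delta\mathcal{E}$, and from the $\rho$-factors in the nonlocal and local terms) is exactly the expansion the paper leaves implicit, and it matches the stated right-hand side of \eqref{H}.
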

\begin{proof}
We apply the Lagrange multiplier method to solve variational problem \eqref{p1}.  
Denote $\Psi\colon [0,1]\times M\rightarrow \mathbb{R}$ as the Lagrange multiplier, then the Lagrangian functional in density space forms
\begin{equation*}
\begin{split}
\mathcal{L}(\rho,\Phi,\Psi)=&\int_0^1 \frac{1}{2}\textrm{Hess}^*_\g\mathcal{E}(\rho)(\Phi, \Phi)-\mathcal{F}(\rho)dt +\int_0^1\int \Psi\Big(\partial_t\rho+\nabla\cdot(\rho\nabla\Phi)\Big)dxdt.
\end{split}
\end{equation*}
Hence $\delta_\rho\mathcal{L}=0$, $\delta_\Phi\mathcal{L}=0$, $\delta_\Psi\mathcal{L}=0$ imply the fact that 
\begin{equation*}
\left\{\begin{split}
&\frac{1}{2}\delta_\rho \textrm{Hess}^*_\g\mathcal{E}(\rho)(\Phi, \Phi)-\delta\mathcal{F}(\rho)-\partial_t\Psi-(\nabla\Phi, \nabla\Psi)=0\\
&\nabla_x\cdot(\rho(t,x) \nabla_x\int \nabla_y\delta^2\mathcal{E}(\rho)(x,y)\nabla_y\Phi(t,y)\rho(t,y)dy)+\nabla_x\cdot(\rho\nabla_x^2\delta\mathcal{E}(\rho)\nabla_x\Phi)=\nabla\cdot(\rho\nabla\Psi)\\
&\partial_t\rho+\nabla\cdot(\rho\nabla\Phi)=0,
\end{split}\right.
\end{equation*}
which finishes the proof.
\end{proof}
In above derivations, we notice that $\Psi$ is the momentum variable in $\g^{\H}$ and $\Phi$ is the momentum variable in $\g$.
Here the Hamiltonian in $(\mathcal{P}, g^{\H})$ satisfies 
\begin{equation*}
\mathcal{H}(\rho,\Psi)=\frac{1}{2}\textrm{Hess}^*_{\g}\mathcal{E}(\rho)(\Phi, \Phi)+\mathcal{F}(\rho),
\end{equation*}
where $\Phi,\Psi$ are associated with the Poisson equation \eqref{PO2}. We notice the Hamiltonian flow \eqref{H} is equivalent to the following formulation  
\begin{equation*}
\partial_{tt}\rho+\bf{\Gamma}^H(\rho)(\partial_t\rho, \partial_t\rho)=-\textrm{grad}_{ \g^\hh}\mathcal{F}(\rho).
\end{equation*}
where $\Gamma^H(\rho)$ is the Christoffel symbol in transport Hessian metric. We omit the detailed formulations fo $\Gamma^H$; see details in \cite{LiG}. In particular, when $\mathcal{F}(\rho)=0$, the above Hamiltonian flow {leads to} the geodesic equation in $(\mathcal{P},  \g^\hh)$.  
\subsection{Examples}
We next present several examples of transport Hessian Hamiltonian flows. 
\begin{example}[Linear energy]
Consider $\mathcal{E}(\rho)=\int E(x)\rho(x)dx$. Then the transport Hessian Hamiltonian flow forms 
\begin{equation*}
\left\{\begin{split}
&\partial_t\rho+\nabla\cdot(\rho\nabla\Phi)=0\\
&\partial_t\Psi+(\nabla\Phi, \nabla\Psi)-\frac{1}{2}\nabla^2E(x)(\nabla\Phi,\nabla\Phi)+\delta\mathcal{F}(\rho)=0\\
&\nabla\cdot(\rho\nabla\Psi)=\nabla\cdot(\rho\nabla^2E(x)\nabla\Phi).
\end{split}\right.
\end{equation*}
Again, if $E(x)=\frac{1}{2}\int x^2\rho(x)dx$, then the Hamiltonian flow satisfies 
\begin{equation*}
\left\{\begin{split}
&\partial_t\rho+\nabla\cdot(\rho\nabla\Phi)=0\\
&\partial_t\Phi+\frac{1}{2}(\nabla\Phi, \nabla\Phi)+\delta\mathcal{F}(\rho)=0.
\end{split}\right.
\end{equation*}
The above equation system is known as the {Wasserstein Hamiltonian flow} \cite{LC, LiG}. It is a particular format for compressible Euler equations.
In particular, if $\mathcal{F}(\rho)=0$, it is the {Wasserstein geodesics equation} \cite{Villani2009_optimal}. 
\end{example}

\begin{example}[Interaction energy]
Consider $\mathcal{E}(\rho)=\frac{1}{2}\int\int W(x,y)\rho(x)\rho(y)dxdy$. Then the transport Hessian Hamiltonian flow satisfies
\begin{equation*}
\left\{\begin{split}
&\partial_t\rho+\nabla\cdot(\rho\nabla\Phi)=0\\
&\partial_t\Psi+(\nabla\Phi, \nabla\Psi)+\delta\mathcal{F}(\rho)=\int \nabla_{xy}^2W(x,y)(\nabla\Phi(t,x),\nabla\Phi(t,y))\rho(t,y)dy\\
&\hspace{4.6cm}+\frac{1}{2}\int \nabla^2_yW(x,y)(\nabla_y\Phi(t,y),\nabla_y\Phi(t,y)) \rho(t,y)dy\\
&\hspace{4.6cm}+\frac{1}{2}\int \nabla^2_xW(x,y)(\nabla_x\Phi(t,x), \nabla_x\Phi(t,x))\rho(t,y)dy\\
&\nabla\cdot(\rho\nabla\Psi)=\nabla_x\cdot\Big(\rho(t,x)\int [\nabla^2_{xy}W(x,y)\nabla_y\Phi(t,y)+\nabla^2_xW(x,y)\nabla\Phi(t,x)]\rho(t,y)dy\Big).
\end{split}\right.
\end{equation*}
\end{example}

\begin{example}[Entropy]
Consider $\mathcal{E}(\rho)=\int f(\rho)(x)dx$. Then the transport Hessian Hamiltonian flow satisfies
\begin{equation*}
\left\{\begin{split}
&\partial_t\rho+\nabla\cdot(\rho\nabla\Phi)=0\\
&\partial_t\Psi+(\nabla\Phi, \nabla\Psi)-\frac{1}{2}\|\nabla^2\Phi\|^2p'(\rho)-\frac{1}{2}|\Delta\Phi|^2p'_2(\rho)+\delta\mathcal{F}(\rho)=0\\
&-\nabla\cdot(\rho\nabla\Psi)=\nabla^2\colon(p(\rho)\nabla^2\Phi)+\Delta(p_2(\rho)\Delta \Phi).
\end{split}\right.
\end{equation*}
\end{example}
We next present several examples for transport Hessian Hamiltonian flows of entropies. 
\begin{example}[Quadratic entropy]
Consider $\mathcal{E}(\rho)=\frac{1}{2}\int \rho(x)^2dx$. Then the transport Hessian Hamiltonian flow satisfies
\begin{equation*}
\left\{\begin{split}
&\partial_t\rho+\nabla\cdot(\rho\nabla\Phi)=0\\
&\partial_t\Psi+(\nabla\Phi, \nabla\Psi)-\frac{1}{2}\|\nabla^2\Phi\|^2\rho-\frac{1}{2}|\Delta\Phi|^2\rho+\delta\mathcal{F}(\rho)=0\\
&-\nabla\cdot(\rho\nabla\Psi)=\frac{1}{2}\nabla^2\colon(\rho\nabla^2\Phi)+\frac{1}{2}\Delta(\rho\Delta \Phi).
\end{split}\right.
\end{equation*}
\end{example}
\begin{example}[{Boltzmann--Shannon entropy}]\label{BS}
Consider $\mathcal{E}(\rho)=\int\rho(x)\log\rho(x)dx$. Then the transport Hessian Hamiltonian flow satisfies
\begin{equation}\label{GBS}
\left\{\begin{split}
&\partial_t\rho+\nabla\cdot(\rho\nabla\Phi)=0\\
&\partial_t\Psi+(\nabla\Phi, \nabla\Psi)-\frac{1}{2}\|\nabla^2\Phi\|^2+\delta\mathcal{F}(\rho)=0\\
&-\nabla\cdot(\rho\nabla\Psi)=\nabla^2\colon(\rho\nabla^2\Phi).
\end{split}\right.
\end{equation}
\end{example}
\subsection{Connections with Shallow water equation}
We next demonstrate that equation \eqref{GBS} connects with the Shallow water equation. If $M=\mathbb{T}^1$ and
\begin{equation*}
\mathcal{E}(\rho)=\int\rho\log\rho dx,\qquad\mathcal{F}(\rho)=-\frac{1}{2}\int \rho^2 dx,
\end{equation*}
then equation \eqref{GBS} is known as the Shallow water equation in fluid dynamics; see \cite{Modin1} and many references therein. In other words, in one dimensional sample space, denote $v(t,x)=\nabla\Phi(t,x)$, then equation \eqref{GBS} forms 
\begin{equation*}
\left\{\begin{split}
&\partial_t\rho+\nabla\cdot(\rho v)=0\\
&\partial_t\Psi+(v, \nabla\Psi)-\frac{1}{2}\|\nabla v\|^2-\rho=0\\
&-\rho\nabla\Psi=\nabla\cdot(\rho\nabla v).
\end{split}\right.
\end{equation*}
The above equation system is the minimizer of variational problem 
\begin{equation}\label{p1}
\inf_{\rho, v=\nabla\Phi} \Big\{\int_0^1 \int \Big(\|\nabla v(t,x)\|^2\rho(t,x)+\frac{1}{2}\rho(t,x)^2 \Big)dxdt\colon \partial_t\rho(t,x)+\nabla\cdot(\rho(t,x) v(t,x))=0\Big\},
\end{equation}
where the infimum is taken among continuity equations with {\em gradient vector fields} and suitable boundary conditions on initial and terminal densities. Here $\Psi$ is the Lagrange multiplier of the continuity equation. 
\begin{remark}
We remark that the Hamiltonian flows in Hessian density manifold of {Boltzmann--Shanon entropy} coincides with the ones using Hessian operators in diffeomorphism space \cite{SW} or semi-invariant metric \cite{Modin1} in one dimensional sample space. Other than one dimensional space, these metrics and related variational structures are different. In other words, consider the variational problem 
\begin{equation}\label{p2}
\inf_{\rho, v}\Big\{\int_0^1\int \Big(\|\nabla v(t,x)\|^2\rho(t,x)+\frac{\rho^2}{2}\Big) dxdt \colon \partial_t\rho(t,x)+\nabla\cdot(\rho(t,x)v(t,x))=0\Big\},
\end{equation}
where the infimum is taken among continuity equations with all vector fields and suitable initial and terminal densities. When $\textrm{dim}(M)=1$, variational problems  \eqref{p1} and \eqref{p2} are equivalent. If $\textrm{dim}(M)\neq 1$, they are with different formulations. 
\end{remark}
To summarize, we connect the Shallow water equation with {Boltzmann--Shannon entropy} and transport Hessian metrics.

\section{Finite dimensional Transport Hessian metric}\label{section7}
In this section, we demonstrate transport Hessian metrics in finite dimensional probability models. { Here the probability models are commonly used in applications, based on AI and classical computational physics methods, such as finite element or finite volume methods. In other words, the probability model represents the subset of probability measure space, using which one can numerically approximate the dynamical behaviors of mathematical physics equations}. Shortly, we pull back the transport Hessian metric into a finite dimensional parameter space. Several examples of distance functions are provided, including one dimensional probability models and Gaussian families.

For the simplicity of presentation, {we focus on the transport Hessian metric  \eqref{SE} of negative Shannon--Boltzmann entropy, i.e. $\int \rho\log\rho dx$}. Consider a statistical model defined by a triplet $(\Theta,  M, \rho)$. For the simplicity of presentation, we assume $\Theta\subset\mathbb{R}^d$ and $\rho\colon \Theta\rightarrow \mathcal{P}( M)$ is a parametrization function. In this case, $\rho(\Theta)\subset \mathcal{P}( M)$. We assume that the parameterization map $\rho$ is locally injective and smooth. Given $\rho(\theta, \cdot)\in \rho(\Theta)$, denote the tangent space of probability density space by
\begin{equation*}
T_{\rho_\theta}\rho(\Theta)=\big\{\dot\rho_\theta=(\nabla_\theta\rho_\theta, \dot\theta)\in C^{\infty}( M)\colon \dot\theta\in T_{\theta}\Theta\big\}.
\end{equation*}
We define a Riemannian metric $G_\hh(\theta)=(G_\hh(\theta)_{ij})_{1\leq i,j\leq d}\in \mathbb{R}^{d\times d}$ on $\Theta$ as the pull-back of $ \g^\hh$ on $\mathcal{P}$, i.e. 
\begin{equation}\label{pullback}
\dot\theta^{\ts}G_\hh(\theta)\dot\theta= \g^\hh_{\rho_\theta}(\dot\rho_\theta, \dot\rho_\theta). 
\end{equation}
Here $G_\hh$ is a semi-positive matrix function and $(\Theta, G_\hh)$ is named the statistical manifold. Denote by $\nabla$ the Euclidean derivative w.r.t $x$ and $\partial_\theta$ the Euclidean derivative w.r.t. $\theta$. We are now ready to present the transport Hessian metric.  
\begin{definition}[Transport Hessian information matrix]
The transport Hessian metric in statistical manifold $(\Theta, G_\hh)$ satisfies 
\begin{equation*}
G_{H}(\theta)_{ij}=\int \mathrm{tr}\big(\nabla^2\Phi_i(x;\theta), \nabla^2\Phi_j(x;\theta)\big) \rho(x;\theta)dx,
\end{equation*}
where
\begin{equation*}
-\nabla\cdot(\rho(x;\theta) \nabla \Phi_k(x;\theta))=\nabla_{\theta_k}\rho(x;\theta),\qquad \textrm{$k=i,j$.}
\end{equation*}
\end{definition}
\begin{remark}
Following studies in \cite{LiG5}, we call this finite dimensional metric the {\em transport Hessian information matrix}. This is the other generalization of information matrix studied in \cite{IG, IG1, IG2}.
We leave the systematic studies of transport information matrix in both statistics and {AI} inference problems for the future work. 
\end{remark}
\begin{proof}
The definition follows from the definition of pull-back operator. Notice that 
\begin{equation*}
\begin{split}
G_{H}(\theta)_{ij}=& \g^\hh(\rho)(\nabla_{\theta_i}\rho, \nabla_{\theta_j}\rho)\\
=&\int \Big(\nabla_{\theta_i} \rho(x;\theta), (\nabla\cdot \rho \nabla)^{-1}\Big(\nabla^2\colon \rho \nabla^2\Big)(\nabla\cdot \rho\nabla)^{-1}\nabla_{\theta_j}\rho(x;\theta)\Big) dx.
\end{split}
\end{equation*}
By denoting {$-\nabla\cdot(\rho \nabla \Phi_k)=\nabla_{\theta_k}\rho$, $k=i,j$}, we finish the proof.
\end{proof}

We next present several closed formulas of finite dimensional transport Hessian information metrics. 
\begin{example}[{Transport Hessian metric in one dimensional sample space}]
If $M=\mathbb{T}^1$, then
\begin{equation*}
\nabla\Phi_k(x;\theta)=-\frac{1}{\rho(x;\theta)}\partial_{\theta_k}F(x;\theta),
\end{equation*}
where $k=1,\cdots, d$ and $F$ is the cumulative distribution function of $\rho(x;\theta)$ with $F(x;\theta)=\int^x_0 \rho(y;\theta)dy$. In this case, we have
\begin{equation*}
\begin{split}
G_\hh(\theta)_{ij}=&\int \nabla^2\Phi_i \nabla^2\Phi_j \rho dx\\
=&\int \nabla (\frac{\partial_{\theta_i}F}{\rho})\cdot \nabla(\frac{\partial_{\theta_j}F}{\rho})\rho dx\\
=&\int (\frac{\nabla \partial_{\theta_i}F}{\rho}+\partial_{\theta_i}F\nabla \frac{1}{\rho})(\frac{\nabla \partial_{\theta_j}F}{\rho}+\partial_{\theta_j}F\nabla \frac{1}{\rho})\rho dx\\
=&\int (\frac{\partial_{\theta_i}\rho}{\rho}+\partial_{\theta_i}F\nabla \frac{1}{\rho})(\frac{\partial_{\theta_j}\rho}{\rho}+\partial_{\theta_j}F\nabla \frac{1}{\rho})\rho dx\\
=&\int \frac{\partial_{\theta_i}\rho \partial_{\theta_j}\rho}{\rho}+\partial_{\theta_i}F\nabla \frac{1}{\rho}\partial_{\theta_j}\rho  +\partial_{\theta_j}F\nabla \frac{1}{\rho}\partial_{\theta_i}\rho
+\partial_{\theta_i}F\partial_{\theta_j}F(\nabla\frac{1}{\rho})^2\rho dx\\
=&\quad\int \big(\rho \partial_{\theta_i}\log\rho \partial_{\theta_j}\log\rho\big) dx\\
&+\int\big(-\partial_{\theta_i}F\nabla\log\rho\partial_{\theta_j}\log\rho-\partial_{\theta_j}F\nabla\log\rho\partial_{\theta_i}\log\rho
+\frac{\partial_{\theta_i}F\partial_{\theta_j}F}{\rho}(\nabla\log\rho)^2\big)dx,
\end{split}
\end{equation*}
where we use the fact that $\nabla\frac{1}{\rho}=-\frac{\nabla\rho}{\rho^2}=-\frac{1}{\rho}\nabla\log\rho$, $\nabla\log\rho=\frac{\nabla \rho}{\rho}$ and $\partial_\theta\log\rho=\frac{\partial_\theta\rho}{\rho}$ in the last equality. 
We notice that the transport Hessian metric is a modification of Fisher--Rao metric with the transport Levi-Civita connection; see details in \cite{LiG}. 
\end{example}
\begin{remark}
We compare all information matrices in one dimensional sample space as follows:
\begin{equation*}
\begin{split}
G_F(\theta)_{ij}=&\int \frac{\partial_{\theta_i}\rho(x;\theta)\partial_{\theta_j}\rho(x;\theta)}{\rho(x;\theta)}dx\hspace{3.3cm}\textrm{Fisher information matrix}\\
G_W(\theta)_{ij}=&\int \frac{\partial_{\theta_i}F(x;\theta)\partial_{\theta_j}F(x;\theta)}{\rho(x;\theta)}dx\hspace{3cm}\textrm{Wasserstein information matrix}   \\
G_{\mathrm{H}}(\theta)_{ij}=&\int \nabla (\frac{\partial_{\theta_i}F(x; \theta)}{\rho(x; \theta)})\cdot \nabla(\frac{\partial_{\theta_j}F(x;\theta)}{\rho(x;\theta)})\rho(x;\theta) dx\quad\textrm{Transport Hessian information matrix}
\end{split}
\end{equation*}
\end{remark}

\begin{example}[{Transport Hessian distance in Gaussian family}]\label{Gaussian}
Consider a Gaussian family by
\begin{equation}\label{GF}
p(x;\theta)=\frac{1}{\sqrt{2\pi\Sigma}}e^{-\frac{1}{2}(x-m)^{\ts}\Sigma^{-1}(x-m)},
\end{equation}
where $x\in\mathbb{R}^d$, $\theta=(m,\Sigma)$, $m\in\mathbb{R}^{d}$ is a mean value vector, and $\Sigma\in \mathbb{R}^{d\times d}$ is a positive definite covariance matrix. In this case, 
the continuity equation with the gradient drift has a closed form solution. 
Denote $\dot\theta=(\dot m, \dot \Sigma)\in T_\theta\Theta$. We can check that if \begin{equation*}
-\nabla\cdot(\rho \nabla\Phi)=(\nabla_\theta\rho, \dot\theta),
\end{equation*}
then there exists a symmetric matrix $S\in\mathbb{R}^{d\times d}$ and $b\in \mathbb{R}^d$, such that 
\begin{equation*}
\Phi(x)=\frac{1}{2}x^{\ts}Sx+b^{\ts}x,
\end{equation*}
and 
\begin{equation*}
\dot \Sigma=\Sigma S+S\Sigma,\qquad \dot m=b. 
\end{equation*}
Hence the transport Hessian metric in Gaussian family satisfies
\begin{equation*}
G_\hh(\Sigma)((\dot m_1, \dot \Sigma_1), (\dot m_2, \dot \Sigma_2))= \mathrm{tr}(S_1S_2),
\end{equation*}
where
\begin{equation*}
\dot \Sigma_1 =\Sigma S_1+S_1\Sigma,\quad \dot \Sigma_2 =\Sigma S_2+S_2\Sigma.
\end{equation*}
We notice that the transport Hessian metric is degenerate in the direction of mean values. Besides, we consider a Gaussian family with zero mean in one dimensional space. In this case, 
\begin{equation*}
G_\hh(\Sigma)( \dot \Sigma_1, \dot \Sigma_2)= S_1S_2=\frac{\dot\Sigma_1\dot \Sigma_2}{4\Sigma^2}.
\end{equation*}
Hence the distance function has a closed form solution. Notice 
\begin{equation*}
\begin{split}
\mathrm{Dist}_{\mathrm{H}}(\Sigma_0, \Sigma_1)^2=&\inf_{\Sigma\colon [0,1]\rightarrow \mathbb{R}_+} \Big\{\int_0^1 G_\hh(\frac{d}{dt} \Sigma(t), \frac{d}{dt}\Sigma(t))dt\colon ~\textrm{$\Sigma_0$, $\Sigma_1$ fixed}\Big\}\\
=&\inf_{\Sigma\colon [0,1]\rightarrow \mathbb{R}_+} \Big\{\int_0^1 \frac{(\frac{d}{dt}\Sigma(t))^2}{4\Sigma^2}dt\colon ~\textrm{$\Sigma_0$, $\Sigma_1$ fixed}\Big\}\\
=&\inf_{\Sigma\colon [0,1]\rightarrow \mathbb{R}_+} \Big\{\int_0^1 \frac{1}{4}(\frac{d}{dt}\log{\Sigma(t)})^2dt~\colon ~\textrm{$\Sigma_0$, $\Sigma_1$ fixed}\Big\},
\end{split}
\end{equation*}
where we use the fact that $\frac{d}{dt}\log{\Sigma(t)}=\frac{\frac{d}{dt}\Sigma(t)}{{\Sigma(t)}}$. Hence the geodesics satisfies $\frac{d^2}{dt^2}\log{\Sigma(t)}=0$, and $\log{\Sigma(t)}=(1-t)\log{\Sigma_0}+t\log{\Sigma_1}$. Hence we derive the closed form solution for the transport Hessian metric: 
\begin{equation*}
\mathrm{Dist}_{\mathrm{H}}(\Sigma_0, \Sigma_1)=\frac{1}{2}\|\log\Sigma_0-\log\Sigma_1\|. 
\end{equation*}
In this case, we observe that the transport Hessian metric coincides with the Fisher--Rao metric for covariance matrices in Gaussian families.  
\end{example}

\begin{example}[Generalized transport Hessian distance in Gaussian family]
In this example, we consider a finite dimensional transport Hessian metric for the energy $\mathcal{E}(\rho)=\int(\rho\log\rho+\frac{x^2}{2}\rho)dx$. In this case, the metric $ \g^\hh$ forms 
\begin{equation*}
 \g^\hh(\rho)(\sigma_1, \sigma_2)=\int \Big\{\mathrm{tr}(\nabla^2\Phi_1(x),\nabla^2\Phi_2(x))+(\nabla\Phi_1(x), \nabla\Phi_2(x))\Big\}\rho(x) dx.
\end{equation*}
where $\sigma_i=-\nabla\cdot(\rho\nabla\Phi_i)$, $i=1,2$. Similarly as in Example \ref{Gaussian}, we consider the Gaussian family \eqref{GF} in one dimensional sample space. Then 
\begin{equation*}
G_\hh(m,\Sigma)( (\dot m_1, \dot \Sigma_1), (\dot m_2, \dot \Sigma_2))= S_1S_2+S_1\Sigma S_2+b_1b_2=\frac{\dot\Sigma_1\dot \Sigma_2}{4\Sigma^2}+\frac{\dot\Sigma_1\dot\Sigma_2}{4\Sigma}+\dot m_1\dot m_2.
\end{equation*}
In this case, the transport Hessian distance function has a closed form solution. Similarly, 
\begin{equation*}
\begin{split}
&\mathrm{Dist}_{\mathrm{H}}((m_0,\Sigma_0), (m_1,\Sigma_1))^2\\
=&\inf_{\Sigma\colon [0,1]\rightarrow \mathbb{R}_+} \Big\{\int_0^1 G_\hh(\frac{d}{dt} \Sigma(t), \frac{d}{dt}\Sigma(t))dt\colon ~\textrm{$(m_0, \Sigma_0)$, $(m_1, \Sigma_1)$ fixed}\Big\}\\
=&\inf_{\Sigma\colon [0,1]\rightarrow \mathbb{R}_+} \Big\{\int_0^1 (\frac{d}{dt}\Sigma(t))^2(\frac{1}{4\Sigma(t)^2}+\frac{1}{4\Sigma(t)})+(\frac{d}{dt}m(t))^2dt\colon ~\textrm{$(m_0, \Sigma_0)$, $(m_1, \Sigma_1)$ fixed}\Big\}\\
=&\inf_{\Sigma\colon [0,1]\rightarrow \mathbb{R}_+} \Big\{\int_0^1 \|\frac{d}{dt}\big(\sqrt{\Sigma(t)+1}-\mathrm{tanh}^{-1}(\sqrt{\Sigma(t)+1})\big)\|^2+\|\frac{d}{dt}m(t)\|^2dt~\colon ~\textrm{$(m_0, \Sigma_0)$, $(m_1,\Sigma_1)$ fixed}\Big\},
\end{split}
\end{equation*}
where we use the fact that $\frac{d}{dt}\big(\sqrt{\Sigma(t)+1}-\mathrm{tanh}^{-1}(\sqrt{\Sigma(t)+1})\big)=\frac{1}{2}\sqrt{\frac{1}{\Sigma(t)^2}+\frac{1}{\Sigma(t)}}\frac{d}{dt}\Sigma(t)$. 
Hence the geodesics forms 
\begin{equation*}
\left\{\begin{split}
&\frac{d^2}{dt^2}\big(\sqrt{\Sigma(t)+1}-\mathrm{tanh}^{-1}(\sqrt{\Sigma(t)+1})\big)=0\\
&\frac{d^2}{dt^2}m(t)=0.
\end{split}\right.
\end{equation*}
Hence the geodesics satisfies 
\begin{equation*}
\left\{\begin{split}
&\sqrt{\Sigma(t)+1}-\mathrm{tanh}^{-1}(\sqrt{\Sigma(t)+1})=(1-t)(\sqrt{\Sigma_0+1}-\mathrm{tanh}^{-1}(\sqrt{\Sigma_0+1}))\\
&\hspace{6cm}+t (\sqrt{\Sigma_1+1}-\mathrm{tanh}^{-1}(\sqrt{\Sigma_1+1})) \\
&m(t)=(1-t)m_0+tm_1
\end{split}\right.
\end{equation*}
Here we derive the closed form solution of the transport Hessian metric as follows:
\begin{equation*}
\begin{split}
&\mathrm{Dist}_{\mathrm{H}}((m_0,\Sigma_0), (m_1,\Sigma_1))^2\\
=&{\|m_0-m_1\|^2+\|\sqrt{\Sigma_0+1}-\sqrt{\Sigma_1+1}-(\mathrm{tanh}^{-1}(\sqrt{\Sigma_0+1})-\mathrm{tanh}^{-1}(\sqrt{\Sigma_1+1}))\|^2}.
\end{split}
\end{equation*}
\end{example}
There are many other interesting closed form solutions of {transport Hessian metrics and distances}. We shall derive them in the future work. 
\section{Discussions}
In this paper, we study the Hessian metric of a functional in Wasserstein-$2$ space. We name the density space with this transport Hessian metric the Hessian density manifold. 
We discover several connections between dynamics in Hessian density manifold and mathematical physics equations, including Shallow water equations and heat equations. In particular, we demonstrate that a transport Hessian metric induces a mean-field kernel for the Stein metric, following which the Stein variational derivative {leads to} the transport Newton's direction.

Following the transport Hessian metric, there are several future directions among Hessian metrics, scientific computing methods, and machine learning sampling algorithms. Firstly, we notice that the optimal transport metrics belong to a particular class of Hessian metric in density manifold. For these transport Hessian metrics, are there any other formulations, such as Monge problems, Monge-Amper{\'e} equations, or Kantorovich dualities as in optimal transport? {We will study this direction following \cite{AGS}}. Secondly, can we formulate {\em transport divergence functionals} based on transport Hessian metrics? We will conduct this direction following the study of information geometry; Thirdly,  the proposed structure can involve several variational formulations for mathematical physics equations. Can this method be useful for designing numerical algorithms towards Shallow water equations? Lastly, the proposed metric provides a hint for selecting the kernel to accelerate Stein variational sampling algorithms. We shall propose numerical methods to approximate kernels in the proposed Stein--transport Newton's flow. This could be useful in designing mean-field Markov-chain-Monte-Carlo methods.

\noindent\textbf{Data Availability Statement}. Data sharing is not applicable to this article as no new data were created or analyzed in this study.

\end{document}